\documentclass{article}
\usepackage{amsmath}
\usepackage{amsfonts}
\usepackage{amsthm}
\usepackage{amssymb, setspace}
\usepackage{mathtools}
\usepackage{lipsum}
\usepackage{color,graphicx,epsfig,geometry,fancyhdr,hyperref}
\usepackage[T1]{fontenc}
\usepackage{float}
\usepackage{subfigure} 
\usepackage{commath}
\usepackage{bbm}
\usepackage{mathrsfs,fleqn}
\usepackage{pgfplots}
\pgfplotsset{compat=newest}
\newtheorem{theorem}{Theorem}[section]

\newcommand{\N}{\mathbb{N}}

\newcommand{\R}{\mathbb{R}}
\newcommand{\C}{\mathbb{C}}

\newcommand{\dnu}{\partial_\nu}

\newcommand{\dnuz}{\partial_{\nu_z}}
\newcommand{\pdr}{\partial_r}
\newcommand{\dd}{\mathrm{d}}

\newcommand{\exD}{\mathbb{R}^2 \setminus \overline{D}}

\usepackage{comment}
\usepackage{ulem}
\numberwithin{equation}{section}

\begin{document}

\begin{flushleft}
\Large 
\noindent{\bf \Large Factorization method for a simply supported obstacle from point source measurements via far--field transformation}

\end{flushleft}

\vspace{0.2in}

{\bf  \large Isaac Harris\footnote{corresponding author's email: \texttt{harri814@purdue.edu}} }\\
\indent {\small Department of Mathematics, Purdue University, West Lafayette, IN 47907, United States of America }\\
\indent {\small Email: \texttt{harri814@purdue.edu} }\\

{\bf  \large Andreas Kleefeld}\\
\indent {\small Forschungszentrum J\"{u}lich GmbH, J\"{u}lich Supercomputing Centre, } \\
\indent {\small Wilhelm-Johnen-Stra{\ss}e, 52425 J\"{u}lich, Germany}\\
\indent {\small University of Applied Sciences Aachen, Faculty of Medical Engineering and } \\
\indent {\small Technomathematics, Heinrich-Mu\ss{}mann-Str. 1, 52428 J\"{u}lich, Germany}\\
\indent {\small Email: \texttt{a.kleefeld@fz-juelich.de}}\\

\begin{abstract}
\noindent We consider an inverse shape problem for recovering an unknown simply supported obstacle in two dimensions from near--field point--source measurements for the biharmonic Helmholtz equation. The measured data consist of the scattered field and its Laplacian on a closed measurement curve surrounding the obstacle. By exploiting an operator splitting of the biharmonic operator, we decouple the scattered field into propagating and evanescent components. This decoupling allows us to reformulate the measured data in terms of an acoustic near--field operator for a sound--soft scatterer. Since the acoustic near--field operator does not directly admit the symmetric factorization required by the factorization method, we introduce a far--field transformation (defined independently of the obstacle) that augments the near--field operator into a far--field operator with a symmetric factorization. This yields a rigorous factorization method characterization of the obstacle and leads to a practical reconstruction algorithm based on spectral data of the transformed operator. Finally, we present numerical experiments with synthetic data that demonstrate stable reconstructions under noise and illustrate the role of regularization, including a variant that uses only the scattered field data.
\end{abstract}

\noindent{\bf Keywords:} Biharmonic Scattering; Simply Supported Boundary Conditions; Factorization Method\\

\noindent{\bf MSC:} 35P25, 35J30

\section{Introduction}
In this paper, we consider the inverse shape problem of recovering a simply supported obstacle in a thin elastic plate from measured near--field data. This two--dimensional scattering problem concerns a biharmonic Helmholtz equation coupled with simply supported boundary conditions. We develop a factorization method for reconstructing the unknown obstacle from near--field measurements generated by a point--source incident field. The factorization method is one of many well--known qualitative methods. Qualitative inversion methods are non--iterative imaging techniques used to reconstruct the shape, size, and location of an unknown scatterer directly from the measured data. This is done by evaluating an imaging functional derived from the scattering data, without solving a full optimization problem for the boundary. The problem we consider here corresponds to the scattering of flexural waves in thin elastic plates, which is commonly modeled by the biharmonic Helmholtz equation. This model has applications ranging from nondestructive testing to structural health monitoring \cite{applicationref1,applicationref2,applicationref3,applicationref4}. For our flexural wave scattering problem, we assume that the scatterer is simply supported with Poisson ratio (defined by the thickness and frequency, see, e.g., \cite{ffdata-simpsupport}) equal to one. This corresponds to the time--harmonic out--of--plane displacement field satisfying a fourth--order partial differential equation. Here, the presence of an unknown simply supported obstacle, denoted by a bounded open set $D \subset \R^2$, leads to a measurable scattered field that we can use to reconstruct the region $D$. Recovering an unknown scatterer from near--field measurements has been studied for many acoustic scattering problems, e.g., in \cite{nf-fft-dsm,HarrisRome,Hu-nfFM,shixu-nfFM,QinColton}. Also, for some recent work on applying the factorization method to elastic scatterers, see, e.g., \cite{ElasticFM1,ElasticFM2}.

The approach we propose is based on the factorization method (see the monograph \cite{kirschbook}), which provides a rigorous characterization of the obstacle in terms of spectral data of an associated compact operator. Indeed, the factorization method allows one to connect the unknown obstacle with the range of the data operator. Therefore, the inversion algorithm only requires determining whether a given function depending on a `sampling point' $z\in \R^2$ is in the range of the measured data operator. Since our measurements are collected in the near--field generated by point sources, two difficulties arise: (i) the biharmonic scattering problem is fourth order with little or no literature on extending the factorization method to this model, and (ii) the associated near--field operator does not directly yield the symmetric factorization required by the factorization method. Our analysis relies on an operator splitting of the forward problem, which decouples the biharmonic equation into a Helmholtz equation and a modified Helmholtz equation. This allows us to reformulate the measured data to correspond to acoustic near--field data for a sound--soft scatterer. With this, following the far--field transformation ideas in \cite{nf-fft-dsm}, we introduce explicit bounded operators (defined independently of the scatterer $D$) that augment the biharmonic near--field operator into the associated far--field operator for a sound--soft acoustic scatterer, to which the results in \cite{kirschbook} can be applied.

The main contributions of this paper are as follows. We establish well--posedness of the simply supported biharmonic scattering problem, where the incident field is assumed to be a point source, by exploiting the decoupling of the scattered field into propagating and evanescent parts. This is similar to studying scattering in a waveguide. With this, we can derive a far--field transformation for the biharmonic near--field operator associated with our problem that yields a symmetric factorization. This will then provide a rigorous factorization method for our biharmonic inverse scattering problem. To the best of our knowledge, this is the first time the factorization method has been applied to an inverse flexural wave scattering problem with near--field measurements. In \cite{FM-wRafaGen} the factorization method was applied to a penetrable scatterer using far--field measurements and in \cite{near-lsmBH,DSM-BH24,LSM-BH25} other qualitative inversion methods were applied to a clamped obstacle.

The paper is organized as follows. In Section \ref{dp}, we formulate the direct scattering problem and prove well--posedness. Next, we study the inverse shape problem, introduce the near--field operator, and derive the far--field transformation leading to the factorization method given in Section \ref{ip}. In Section \ref{numerics}, we describe the numerical implementation and provide reconstructions. Here we provide a detailed numerical study of the inversion algorithm studied in the previous section. We conclude in Section \ref{conclusion}, where we discuss our numerical findings as well as other possible areas to explore for this problem.

\section{The Direct Scattering Problem}\label{dp}
In this section, we introduce the direct scattering problem under consideration and discuss the well--posedness. Here, we will show well--posedness for the biharmonic scattering of a simply supported obstacle by a point source incident wave. With this in mind, we assume that the simply supported obstacle is modeled as a bounded domain $D \subset \R^2$ with a $C^2$--smooth boundary $\partial D$. To illuminate the obstacle, we assume that an incident point source is used, which we denote as 
$$u^{\text{inc}}( \cdot  \, , y) = \mathbb{G} ( \cdot  \, , y),\quad  \text{where $y \in \Gamma$} \,\,\,  \text{such that} \,\, \, \overline{D} \subset\,\text{Int}(\Gamma)$$
for some smooth closed curve $\Gamma$. 
Here, $\Gamma$ is the measurement curve where the sources and receivers are assumed to be placed. For our problem, the incident point source is given by
\begin{align}\label{fundamentalsol}
\mathbb{G}(x,y) = - \frac{\text{i}}{8k^2} \left[ H^{(1)}_0(k|x-y|) -  H^{(1)}_0(\text{i}k|x-y|)\right] \quad \text{for $x\neq y$}
\end{align}
with $H^{(1)}_0$ being defined as the Hankel function of the first kind of order zero and $k>0$ is the wave number. Notice that $\mathbb{G}( \cdot \, , y)$ satisfies 
$$\Delta^2 \mathbb{G} ( \cdot  \, , y) - k^4\mathbb{G} ( \cdot  \, , y) = -\delta( \cdot - y) \quad \text{ in $\R^2$}$$
along with the radiation conditions
$$\lim_{r \to \infty} \sqrt{r} \big(\pdr{\mathbb{G}(x,y)} - \text{i} k \mathbb{G}(x,y)\big) = 0 \quad \text{ and } \quad \lim_{r \to \infty} \sqrt{r}\big(\pdr{ \Delta \mathbb{G}(x,y)} - \text{i} k \Delta \mathbb{G}(x,y)\big) = 0$$
with $r=|x|$. The above limits are assumed to hold uniformly for $\hat{x}=x/|x|$ and $y \in \Gamma$. This gives that $\mathbb{G}( \cdot \, , y)$ is the radiating fundamental solution to the biharmonic Helmholtz operator $(\Delta^2 -k^4)$ in $\R^2$. Note that by \eqref{fundamentalsol} we have that $\mathbb{G}(x,y)$ is $C^\infty$--smooth for all $x \neq y$ in $\R^2$.

With our simply supported obstacle assumption, we have that the total displacement field and its Laplacian have zero trace on the boundary $\partial D$. This would imply that the corresponding scattered field $u^{\text{scat}}(x,y)$ for $x \in \exD$ and $y \in \Gamma$ satisfies the biharmonic scattering problem for a fixed wave number $k>0$, given by 
\begin{align}
\Delta^2 u^{\text{scat}} - k^4 u^{\text{scat}} = 0  \quad\quad\quad\quad& \text{in } \mathbb{R}^2\setminus \overline{D}, \label{biharmonic} \\
u^{\text{scat}} =-u^{\text{inc}}  \quad \text{ and } \quad \Delta{u^{\text{scat}}} =- \Delta{u^{\text{inc}}} \quad & \text{on $\partial D$}. \label{bcs}
\end{align}
We let the derivatives in \eqref{biharmonic}--\eqref{bcs} be with respect to the variable $x$. To close the system, the scattered field $u^{\text{scat}}$ satisfies the radiation conditions (see, e.g., \cite{iterative-BH23,invsource-biharm})
\begin{align}\label{SRC}
\lim_{r \to \infty} \sqrt{r}(\pdr{u^{\text{scat}}} - \text{i} k u^{\text{scat}}) = 0 \quad \text{ and } \quad \lim_{r \to \infty} \sqrt{r}(\pdr{ \Delta u^{\text{scat}}} - \text{i} k \Delta u^{\text{scat}}) = 0  
\end{align}
which is assumed to hold uniformly in $\hat{x}=x/|x|$. We will show that there is a unique scattered field $u^{\text{scat}}( \cdot  \, ,y) \in H^2_{loc}(\exD)$ that satisfies the scattering problem \eqref{biharmonic}--\eqref{SRC} for any $y \in \Gamma$. 

The direct problem studied here is similar to those studied in \cite{biwellposed,clamped-wellposed}. Here, our analysis of well--posedness is significantly different and will aid in our study of the inverse problem. We also note that this reduces the regularity of the boundary to $C^2$ from $C^{3,\alpha}$, which is needed in \cite{clamped-wellposed}. To this end, we show that the direct problem \eqref{biharmonic}--\eqref{SRC} can be decoupled into two second--order boundary value problems. Indeed, it is common to express the scattered field $u^{\text{scat}}$ in terms of two auxiliary functions denoted $u_\textrm{pr}$ and $u_\textrm{ev}$ that are defined by the expressions 
\begin{align}\label{vhvm}
u_\textrm{pr} = -\frac{1}{2k^2}\big(\Delta u^{\text{scat}} - k^2 u^{\text{scat}}\big) \quad \text{and} \quad  u_\textrm{ev} = \frac{1}{2k^2}\big(\Delta u^{\text{scat}} + k^2 u^{\text{scat}}\big).
\end{align}
This would imply that 
$$u^{\text{scat}} = u_\textrm{pr} + u_\textrm{ev}$$
just as in \cite{DSM-BH24} as well as other works. Here, as in other works, we have that $u_\textrm{pr}$ is the propagating part of the scattered field whereas $u_\textrm{ev}$ is the evanescent part of the scattered field.

Using the splitting of the biharmonic Helmholtz operator
$$\Delta^2 - k^4 = (\Delta-k^2)(\Delta+k^2)=(\Delta + k^2)(\Delta - k^2)$$
one obtains that $u_\textrm{pr}$ satisfies the Helmholtz equation and $u_\textrm{ev}$ satisfies the modified Helmholtz equation, i.e., with wave number $\mathrm{i}k$. Now, we let the simply supported boundary condition \eqref{bcs} be given by 
$$u^{\text{scat}} = f  \quad \text{ and } \quad \Delta{u^{\text{scat}}} =g \quad \text{on $\partial D$}$$
for some given $f$ and $g \in H^{3/2}(\partial D)$. By the above decomposition we have that equation \eqref{bcs} becomes 
$$u_\textrm{pr} + u_\textrm{ev} = f  \quad \text{ and } \quad  -k^2 u_\textrm{pr} + k^2 u_\textrm{ev} = g \quad \text{on $\partial D$}.$$
Here, we have used the fact that 
$$\Delta(u_\textrm{pr} + u_\textrm{ev}  ) = -k^2 u_\textrm{pr} + k^2 u_\textrm{ev} \quad \text{in } \exD$$ 
and then solved for $u_\textrm{pr}$ and $u_\textrm{ev} $ in the boundary condition. With this, we see that the scattering problem \eqref{biharmonic}--\eqref{SRC} decouples to 
\begin{align}
\Delta u_\textrm{pr} + k^2 u_\textrm{pr} = 0\quad \text{in } \exD \quad \text{with } \quad   u_\textrm{pr} = \frac{k^2 f - g}{2k^2} \quad \text{on } \partial D 
\label{vhvmeq1}  
\end{align}
and 
\begin{align}
\Delta u_\textrm{ev} - k^2 u_\textrm{ev} = 0\quad \text{in } \exD \quad \text{with } \quad   u_\textrm{ev} = \frac{k^2 f + g}{2k^2} \quad \text{on } \partial D \label{vhvmeq2}  
\end{align}
along with the radiation conditions 
\begin{align}
\lim_{r \to \infty} \sqrt{r}(\pdr{u_\textrm{pr}} - \text{i}ku_\textrm{pr}) = 0 \quad \text{ and }  \quad   \lim_{r \to \infty} \sqrt{r}(\pdr{u_\textrm{ev}} - \text{i}ku_\textrm{ev}) = 0. \label{vhvmeq3}
\end{align}
It is known that $u_\textrm{ev} $ and $\partial_r u_\textrm{ev} $ decay exponentially as $r \to \infty$ (see, e.g., \cite{DongLi24,DSM-BH24}). 

Since we have a decoupled system of boundary value problems for $u_\textrm{pr}$ and $u_\textrm{ev}$ given in \eqref{vhvmeq1}--\eqref{vhvmeq3} we can now prove well--posedness of \eqref{biharmonic}--\eqref{SRC}. Indeed, we first show uniqueness, so assume that $u^{\text{scat}}$ satisfies \eqref{biharmonic}--\eqref{SRC} with $u^{\text{inc}}$ replaced by zero. This would imply that $f=g=0$ in \eqref{vhvmeq2}. It is clear from the analysis in \cite[Chapter 5]{Cakoni-Colton-book} that this would imply that $u_\textrm{pr}=u_\textrm{ev}=0$ in $\exD$ which implies that \eqref{biharmonic}--\eqref{SRC} would admit at most one solution.

Now we prove existence of the solution to  \eqref{biharmonic}--\eqref{SRC} and the stability estimate. To this end, by again appealing to \cite[Chapter 5]{Cakoni-Colton-book} we have that  
$$\| u_\textrm{pr} \|_{H^1(B_{3R} \setminus \overline{D})} \leq C \left\{\| f \|_{H^{3/2}(\partial{D})}  + \| g \|_{H^{3/2}(\partial{D})}  \right\}$$ 
and 
$$\| u_\textrm{ev} \|_{H^1(B_{3R} \setminus \overline{D})} \leq C \left\{\| f \|_{H^{3/2}(\partial{D})}  + \| g \|_{H^{3/2}(\partial{D})}  \right\}$$
where $B_R$ is the disk centered at the origin of radius $R>0$ such that $\overline{D} \subset B_R$. To obtain the stability estimate, we notice that by elliptic regularity (see, e.g., \cite[Section 8.6]{Salsa}) we have that $u_\textrm{pr}$ and $u_\textrm{ev}$ are in $H^2_{loc}(\exD)$ since we have assumed that $f$ and $g \in H^{3/2}(\partial D)$. Indeed, by appealing to an interior elliptic regularity estimate in $B_{2R} \setminus \overline{B_R}$ we have that 
$$\| u_\textrm{pr} \|_{H^2(B_{2R} \setminus \overline{B_R})} \leq C \| u_\textrm{pr} \|_{{L^2}(B_{3R} \setminus \overline{D})}  
\quad \text{ and } \quad 
\| u_\textrm{ev} \|_{H^2(B_{2R} \setminus \overline{B_R})} \leq C \| u_\textrm{ev} \|_{{L^2}(B_{3R} \setminus \overline{D})}.$$ 
From this, we have the 
estimate 
$$\| u_\textrm{pr} \|_{H^2(B_{2R} \setminus \overline{B_R})} + \| u_\textrm{ev} \|_{H^2(B_{2R} \setminus \overline{B_R})}  \leq C \left\{\| f \|_{H^{3/2}(\partial{D})}  + \| g \|_{H^{3/2}(\partial{D})}  \right\}.$$ 
Now, we notice that the scattered field $u^{\text{scat}} = u_\textrm{pr} + u_\textrm{ev}$ satisfies 
$$ \Delta u^{\text{scat}} = -k^2(u_\textrm{pr} - u_\textrm{ev}) \quad \text{on $\exD$}$$  
along with the simply supported boundary conditions. Similarly, by appealing to the global elliptic regularity estimate in $B_R \setminus \overline{D}$ one obtains that 
\begin{align*} 
\| u^{\text{scat}} \|_{H^2(B_R \setminus \overline{D})} &\leq C \Big\{ \| u_\textrm{pr} - u_\textrm{ev} \|_{L^2(B_R \setminus \overline{D})}  \\
& \hspace{1in}+ \| u^{\text{scat}} \|_{L^2(B_R \setminus \overline{D})} + \| u^{\text{scat}} \|_{H^{3/2}(\partial{D})} + \| u^{\text{scat}} \|_{H^{3/2}(\partial{B_R})}  \Big\}.
\end{align*}
By appealing to the previous estimates, we have that 
$$ \| u_\textrm{pr} - u_\textrm{ev} \|_{L^2(B_R \setminus \overline{D})}+ \| u^{\text{scat}} \|_{L^2(B_R \setminus \overline{D})} \leq C \left\{\| f \|_{H^{3/2}(\partial{D})}  + \| g \|_{H^{3/2}(\partial{D})}  \right\}.$$ 
This gives the estimate  
$$\| u^{\text{scat}} \|_{H^2(B_R \setminus \overline{D})} \leq C \left\{\| f \|_{H^{3/2}(\partial{D})}  + \| g \|_{H^{3/2}(\partial{D})} +  \| u_\textrm{pr} +u_\textrm{ev} \|_{H^{3/2}(\partial{B_R})}  \right\},$$ 
where we have also used the fact that $u^{\text{scat}} = f$ on $\partial D$. We can now use the Trace Theorem and the previous estimates to obtain 
\begin{align*}
\| u_\textrm{pr} +u_\textrm{ev} \|_{H^{3/2}(\partial{B_R})}  &\leq   C \left\{\| u_\textrm{pr} \|_{H^2(B_{2R} \setminus \overline{B_R})} + \| u_\textrm{ev} \|_{H^2(B_{2R} \setminus \overline{B_R})} \right\} \\
&\leq   C \left\{\| f \|_{H^{3/2}(\partial{D})}  + \| g \|_{H^{3/2}(\partial{D})}  \right\}
\end{align*}
which gives the final estimate 
$$\| u^{\mathrm{scat}} \|_{H^2(B_R \setminus \overline{D})} \leq C \left\{\|f \|_{H^{3/2}(\partial{D})}  + \|g \|_{H^{3/2}(\partial{D})}  \right\}.$$ 
Therefore, we have that the direct scattering problem \eqref{biharmonic}--\eqref{SRC} is well--posed since $u^{\text{inc}}( \cdot \, , y)$ and $\Delta u^{\text{inc}}( \cdot \, , y) \in H^{3/2}(\partial D)$ for any $y \in \Gamma$. This gives the following result. 

\begin{theorem} \label{dp-wellposed}
The biharmonic scattering problem for a simply supported obstacle by a point source incident wave \eqref{biharmonic}--\eqref{SRC} has a unique solution $u^{\mathrm{scat}}( \cdot  \, ,y) \in H^2_{loc}(\exD)$ for all $y \in \Gamma$ that satisfies the estimate 
$$\| u^{\mathrm{scat}}( \cdot  \, ,y) \|_{H^2(B_R \setminus \overline{D})} \leq C \left\{\| u^{\mathrm{inc}}( \cdot \, , y) \|_{H^{3/2}(\partial{D})}  + \| \Delta u^{\mathrm{inc}}( \cdot \, , y) \|_{H^{3/2}(\partial{D})}  \right\}$$ 
for some constant $C>0$. 
\end{theorem}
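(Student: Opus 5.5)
The proof will follow the decoupling argument laid out just before the statement, specialized to the data $f=-u^{\mathrm{inc}}(\cdot\,,y)$ and $g=-\Delta u^{\mathrm{inc}}(\cdot\,,y)$. Since $y\in\Gamma$ and $\overline{D}\subset\mathrm{Int}(\Gamma)$, the point $y$ has positive distance from $\partial D$. Because $\mathbb{G}(\cdot\,,y)$ is $C^\infty$ away from $y$ by \eqref{fundamentalsol}, both $f$ and $g$ are smooth on the $C^2$ boundary $\partial D$. In particular they belong to $H^{3/2}(\partial D)$, so the general estimate derived above applies.

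\textbf{Uniqueness.} Take the difference $w$ of two solutions. It solves \eqref{biharmonic}--\eqref{SRC} with zero data. Define $w_{\mathrm{pr}}$ and $w_{\mathrm{ev}}$ by \eqref{vhvm}. Because $w\in H^2_{loc}$ and interior regularity gives more smoothness, both functions are well defined. They satisfy \eqref{vhvmeq1}--\eqref{vhvmeq2} with zero Dirichlet data, and \eqref{SRC} gives \eqref{vhvmeq3} for each of them, since each is a linear combination of $\Delta w$ and $w$.

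For $w_{\mathrm{pr}}$, the exterior Dirichlet problem for the Helmholtz equation with the Sommerfeld condition has only the trivial solution, by Rellich's lemma (\cite[Chapter 5]{Cakoni-Colton-book}). For $w_{\mathrm{ev}}$, the modified Helmholtz equation has wavenumber $\mathrm{i}k$, and radiating solutions decay exponentially. I would therefore apply Green's identity on $B_r\setminus\overline{D}$:
$$\int_{B_r\setminus \overline D} |\nabla w_{\mathrm{ev}}|^2 + k^2|w_{\mathrm{ev}}|^2 \,\dd x = \int_{\partial B_r} \overline{w_{\mathrm{ev}}}\,\pdr w_{\mathrm{ev}}\,\dd s \longrightarrow 0 \quad (r\to\infty).$$
This forces $w_{\mathrm{ev}}=0$, and hence $w=w_{\mathrm{pr}}+w_{\mathrm{ev}}=0$.

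\textbf{Existence.} Solve \eqref{vhvmeq1} and \eqref{vhvmeq3} for $u_{\mathrm{pr}}$ with data $(k^2f-g)/(2k^2)$, and \eqref{vhvmeq2} and \eqref{vhvmeq3} for $u_{\mathrm{ev}}$ with data $(k^2f+g)/(2k^2)$. The first is solvable by boundary integral equations or variational methods with the Dirichlet-to-Neumann map on $\partial B_R$. For the second, the exterior problem for $-\Delta+k^2$ is coercive in $H^1$, so it is solvable as well. Set $u^{\mathrm{scat}}=u_{\mathrm{pr}}+u_{\mathrm{ev}}$. Then $\Delta u^{\mathrm{scat}}=-k^2u_{\mathrm{pr}}+k^2u_{\mathrm{ev}}$, and applying $(\Delta-k^2)$ to this expression shows \eqref{biharmonic} holds. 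The boundary traces recombine to $f$ and $g$. The radiation conditions \eqref{SRC} follow from \eqref{vhvmeq3} for $u_{\mathrm{pr}}$ together with the exponential decay of $u_{\mathrm{ev}}$ and $\nabla u_{\mathrm{ev}}$.

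\textbf{Estimate.} I would chain the estimates already displayed before the statement, in this order:
\begin{itemize}
\item $H^1$ bounds for $u_{\mathrm{pr}}$ and $u_{\mathrm{ev}}$ on $B_{3R}\setminus\overline D$;
\item interior $H^2$ regularity on the annulus $B_{2R}\setminus\overline{B_R}$;
\item the trace theorem, giving an $H^{3/2}(\partial B_R)$ bound;
\item global $H^2$ regularity for the Dirichlet problem $\Delta u^{\mathrm{scat}}=-k^2(u_{\mathrm{pr}}-u_{\mathrm{ev}})$ on $B_R\setminus\overline D$.
\end{itemize}
That last step is where $C^2$ smoothness of $\partial D$ is needed. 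The result is the stated bound with a constant $C$ independent of $y$. Since $y\in\Gamma$ is arbitrary, the theorem follows.

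The main obstacle is the $H^2$ regularity up to $\partial D$. The decoupled problems give only $H^1$ control directly. Boundary elliptic regularity for a $C^2$ domain with $H^{3/2}$ Dirichlet data and $L^2$ right-hand side is exactly what upgrades this, so I would cite \cite[Section 8.6]{Salsa} with care there. A secondary point is checking that the exponential decay of $u_{\mathrm{ev}}$ makes its radiation condition in \eqref{vhvmeq3} hold, as required by \eqref{SRC}.
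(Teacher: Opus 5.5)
Your proposal is correct and follows the paper's route. You decouple into $u_{\mathrm{pr}}$ and $u_{\mathrm{ev}}$, get uniqueness and existence from the two second-order exterior Dirichlet problems, and chain the same estimates: $H^1$ bounds on $B_{3R}\setminus\overline{D}$, interior regularity on the annulus, the trace theorem on $\partial B_R$, and global $H^2$ regularity on $B_R\setminus\overline{D}$. Your Green's-identity and coercivity arguments for the evanescent part just make explicit what the paper cites from the Cakoni--Colton reference, which strictly treats only the Helmholtz case.
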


Now that we have well--posedness of the direct scattering problem \eqref{biharmonic}--\eqref{SRC}, we turn our attention to solving the inverse shape problem. Therefore, in the following sections we will assume that 
$$u^{\text{scat}}( x , y) \quad \text{ and } \quad \Delta_x u^{\text{scat}}( x , y) \quad \text{ is known for all $x,y \in \Gamma$.}$$ 
From this measured scattering data we will develop a numerical method for recovering the scatterer $D$. Note that a similar problem was considered in \cite{iterative-BH23}, where the scatterer was a clamped obstacle and the point sources are located at a finite number of points in $\exD$. Here, we will derive a factorization method for this problem by appealing to similar analysis as in \cite{nf-fft-dsm}.

\section{Solution to the Inverse Shape Problem}\label{ip}
In this section, we study the inverse shape problem of recovering the scatterer from measured near--field data. Here, we propose using a factorization method \cite{kirschbook}. Recently, in \cite{near-lsmBH} the linear sampling method was studied for recovering a clamped obstacle. In our analysis, we apply a far--field transform to our near--field operator, as is done in \cite{nf-fft-dsm}. This is useful for multiple reasons. First, in acoustic scattering it is known that the near--field operator does not produce a symmetric factorization that is needed to apply the theoretical results of the factorization method, as in \cite{HarrisRome,Hu-nfFM,shixu-nfFM}. Moreover, we see from \cite{DSM-BH24,LSM-BH25} that the far--field data from a biharmonic scattering problem can be handled similarly to the acoustic case, where there is a wealth of literature to exploit. 

Now, recall that by Theorem \ref{dp-wellposed} we have that the simply supported obstacle scattering problem given by \eqref{biharmonic}--\eqref{SRC} is well--posed for any source point $y \in \Gamma$. With this in mind, we will assume that the following near--field data 
\begin{align} \label{nf-data}
\big\{ u^{\text{scat}}( x , y) \quad \text{ and } \quad \Delta_x u^{\text{scat}}( x , y) \quad : \quad \text{ for all $x,y \in \Gamma$} \big\}
\end{align}
is known. Therefore, we study the inverse shape problem of recovering the unknown scatterer $D$ from the data \eqref{nf-data}. Since we have the measured scattered field and its Laplacian for all $x,y \in \Gamma$, we can now define the near--field operator 
\begin{align} \label{nf-op}
N: L^2(\Gamma) \longrightarrow L^2(\Gamma) \quad \text{ given by } \quad (N g)(x) = \int_{\Gamma} \left[ \Delta_x u^{\text{scat}}( x , y) -k^2 u^{\text{scat}}( x , y) \right] g(y) \, \dd s(y). 
\end{align}
We use this near--field operator to exploit the decoupled system \eqref{vhvmeq1}--\eqref{vhvmeq3} used to study $u_\textrm{pr}$ and $u_\textrm{ev}$ in the previous section. Indeed, by \eqref{vhvm} we see that 
$$(N g)(x) = -2k^2 \int_{\Gamma} u_\textrm{pr}(x,y) \, g(y) \, \dd s(y).$$
This implies that the scattering data is equivalent to the near--field data for an acoustic sound--soft obstacle.

\subsection{Uniqueness of the Inverse Problem}\label{unique}
Before studying the factorization method using a far--field transformation for this problem, we will first consider the uniqueness of the inverse problem. Therefore, we will show that the near--field data given in \eqref{nf-data} uniquely determines the scatterer. Here, we notice that by the definition of the fundamental solution $\mathbb{G}(x,y)$ we have that $u_\textrm{pr}(x,y)$ satisfies 
\begin{align} \label{vh-bvp} 
\Delta_x u_\textrm{pr} + k^2 u_\textrm{pr} = 0 \quad \text{in } \,  \exD \quad \text{and } \quad u_\textrm{pr} = \frac{1}{2k^2} \Phi_k ( \cdot \, , y)  \quad \text{on } \,  \partial D
\end{align}
along with the radiation condition \eqref{vhvmeq3}. In equation \eqref{vh-bvp} the boundary data $\Phi_k ( \cdot \, , y)$ is the corresponding fundamental solution for the Helmholtz equation given by 
\begin{align} \label{Phi} 
\Phi_k ( x , y) = \frac{\text{i}}{4} H^{(1)}_0(k|x-y|)  \quad \text{for $x\neq y$.}
\end{align}
Therefore, we have that $-2k^2 u_\textrm{pr} ( x , y)$ is the radiating acoustic scattered field for a sound--soft obstacle generated by an acoustic point source. 

To proceed, we first show that $u_\textrm{pr}$ satisfies the standard reciprocity identity 
\begin{align}\label{recip}
u_\textrm{pr}(x,y) =  u_\textrm{pr}(y,x) \quad \text{ for all $x,y \in \Gamma$}. 
\end{align}
Indeed, we can prove the claim just as it is done in \cite{QinColton}. By appealing to Green's representation formula we have that for $x,y \in \Gamma$
$$u_\textrm{pr}(x,y)= \int_{\partial D} u_\textrm{pr} ( \cdot \, , y ) \dnu  \Phi_k( x , \cdot)  - \dnu u_\textrm{pr} ( \cdot \, , y ) \Phi_k( x , \cdot) \, \dd s$$
and similarly 
$$u_\textrm{pr}(y,x)= \int_{\partial D}  v_H ( \cdot \, , x ) \dnu  \Phi_k( y , \cdot)  -  \dnu u_\textrm{pr} ( \cdot \, , x ) \Phi_k( y , \cdot) \, \dd s.$$
We can now use the fact that $u_\textrm{pr} ( \cdot \, , y )$ and $u_\textrm{pr} ( \cdot \, , x )$ are radiating solutions to the Helmholtz equation in $\exD$ to obtain 
$$0= \int_{\partial D} \dnu u_\textrm{pr} ( \cdot \, , y ) u_\textrm{pr}( \cdot \, , x) - u_\textrm{pr} ( \cdot \, , y ) \dnu  u_\textrm{pr}( \cdot \, , x) \, \dd s$$
and similarly by using Green's second identity in $D$ we can get the equality 
$$0= \int_{\partial D} \dnu \Phi_k ( \cdot \, , y ) \Phi_k ( x , \cdot) - \Phi_k ( \cdot \, , y ) \dnu  \Phi_k ( x , \cdot)  \, \dd s.$$
From the boundary condition in \eqref{vh-bvp} we have that  
\begin{align*}
0=& \int_{\partial D} \left(2k^2 u_\textrm{pr} ( \cdot \, , y ) - \Phi_k ( \cdot \, , y )  \right) \dnu \left(2k^2 u_\textrm{pr} ( \cdot \, , x ) - \Phi_k ( \cdot \, , x )  \right) \\
&\hspace{1.5in}   -  \dnu \left(2k^2 u_\textrm{pr} ( \cdot \, , y ) - \Phi_k ( \cdot \, , y )  \right) \left(2k^2 u_\textrm{pr} ( \cdot \, , x ) - \Phi_k ( \cdot \, , x )  \right) \, \dd s \\
& =- 2k^2 \left[u_\textrm{pr} ( x , y ) - u_\textrm{pr} ( y,x )  \right]
\end{align*}
by expanding the terms in the integrand and using the previous integral identities. Note that we have also used the symmetry of $\Phi_k$. With this identity, we can now prove uniqueness of the inverse problem given the near--field data \eqref{nf-data}.

\begin{theorem} \label{ip-uniqueness}
The near--field data defined by equation \eqref{nf-data} uniquely determines the scatterer. 
\end{theorem}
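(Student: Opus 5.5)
Suppose $D_1$ and $D_2$ are two simply supported obstacles that produce the same near--field data \eqref{nf-data}. The plan is to reduce to the classical uniqueness argument for sound--soft obstacles from point sources (cf.\ \cite{QinColton,Cakoni-Colton-book}). The data for $\Delta_x u^{\text{scat}}$ and $u^{\text{scat}}$ coincide on $\Gamma\times\Gamma$. By \eqref{vhvm}, $u_{\textrm{pr}}^{(1)}(x,y)=u_{\textrm{pr}}^{(2)}(x,y)$ for all $x,y\in\Gamma$. For fixed $y\in\Gamma$, both $u_{\textrm{pr}}^{(j)}(\cdot,y)$ are radiating Helmholtz solutions outside $\Gamma$ with equal Dirichlet data on $\Gamma$. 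Uniqueness of the exterior Dirichlet problem then gives equality in the exterior of $\Gamma$, and unique continuation (analyticity) gives equality in the unbounded component $G$ of $\R^2\setminus(\overline{D_1}\cup\overline{D_2})$.

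Next we move the source point off $\Gamma$. By the reciprocity relation \eqref{recip}, applied to each obstacle (the derivation there works for any $x,y$ outside $\overline{D}$), we have $u^{(1)}_{\textrm{pr}}(x,y)=u^{(2)}_{\textrm{pr}}(x,y)$ for $x\in G$ and $y\in\Gamma$. Hence $u^{(j)}_{\textrm{pr}}(x,\cdot)$ agree on $\Gamma$ as functions of $y$. As functions of $y$ they are radiating Helmholtz solutions outside $\overline{D_j}$, again by reciprocity. The same exterior uniqueness and unique continuation argument therefore gives $u^{(1)}_{\textrm{pr}}(x,z)=u^{(2)}_{\textrm{pr}}(x,z)$ for all $x,z\in G$. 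This also covers point sources $\Phi_k(\cdot,z)$ with $z\in G$.

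Now argue by contradiction. If $D_1\neq D_2$, then without loss of generality there is $x^*\in\partial G$ with $x^*\in\partial D_1\setminus\overline{D_2}$. Take $z_n=x^*+\frac1n\nu(x^*)\in G$. On one hand, $2k^2u^{(2)}_{\textrm{pr}}(x^*,z_n)$ stays bounded, because $x^*$ has positive distance from $\overline{D_2}$ and the solution depends continuously on the source point away from $D_2$ (well--posedness). On the other hand, $2k^2u^{(1)}_{\textrm{pr}}(x^*,z_n)=\Phi_k(x^*,z_n)$ by the boundary condition in \eqref{vh-bvp}, and this blows up logarithmically as $n\to\infty$. Continuity of $u^{(1)}_{\textrm{pr}}(\cdot,z_n)=u^{(2)}_{\textrm{pr}}(\cdot,z_n)$ up to $x^*$ yields the contradiction, so $D_1=D_2$.

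The main obstacle is the continuity and boundedness step: we must justify that $u^{(2)}_{\textrm{pr}}(x,z_n)$ is uniformly bounded for $x$ near $x^*$. The plan is to split $u^{(2)}_{\textrm{pr}}(\cdot,z_n)$ into its dependence on the boundary data $\Phi_k(\cdot,z_n)|_{\partial D_2}$, which is uniformly bounded in $H^{3/2}(\partial D_2)$ since $z_n$ stays away from $D_2$. Then use the stability from \cite[Chapter 5]{Cakoni-Colton-book} together with interior regularity to get pointwise bounds near $x^*$.
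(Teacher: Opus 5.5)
Your proposal is correct and follows essentially the same route as the paper. You obtain equality of $u_{\mathrm{pr}}$ on $\Gamma\times\Gamma$ via \eqref{vhvm}, extend it by exterior uniqueness and unique continuation, move the source off $\Gamma$ with reciprocity \eqref{recip}, and then send a normal sequence $z_n\to x^*\in\partial D_1\setminus\overline{D_2}$ so that $\Phi_k(x^*,z_n)$ blows up while $u^{(2)}_{\mathrm{pr}}(x^*,z_n)$ stays bounded. Restricting to the unbounded component $G$ and choosing $x^*\in\partial G$ is slightly more careful than the paper, which asserts equality on all of $\R^2\setminus\overline{(D^{(1)}\cup D^{(2)})}$ and so tacitly assumes that set is connected.
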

\begin{proof}
To prove the claim we proceed by way of contradiction. To this end, assume there are two scatterers $D^{(1)}$ and $D^{(2)}$ that produce the same near--field data set in \eqref{nf-data}. This would imply that the corresponding $u_\textrm{pr}^{(1)} ( x , y )$ for $D^{(1)}$ and $u_\textrm{pr}^{(2)} ( x , y )$ for $D^{(2)}$ coincide for all $x,y \in \Gamma$. Since the exterior Dirichlet problem for the Helmholtz equation is well--posed this implies that 
$$u_\textrm{pr}^{(1)} ( x , y ) = u_\textrm{pr}^{(2)} ( x , y ) \quad \text{ for all $y\in \Gamma$ and $x \in \R^2 \setminus \overline{(D^{(1)} \cup D^{(2)})}$}$$
by the unique continuation principle. Now, due to the reciprocity identity in \eqref{recip} we can see that 
$$u_\textrm{pr}^{(1)} ( x , y ) = u_\textrm{pr}^{(2)} ( x , y ) \quad \text{ for all $x,y \in \R^2 \setminus \overline{(D^{(1)} \cup D^{(2)})}$}.$$

Without loss of generality, assume that $\overline{D^{(1)}}$ is not contained in $\overline{D^{(2)}}$, then $\partial D^{(1)} \setminus \overline{D^{(2)}}$ is a non--empty set. Therefore, let $y^* \in \partial D^{(1)} \setminus \overline{D^{(2)}}$ and define the sequence 
$$y_n = y^* + \frac{1}{n} \nu_1(y^*)  \quad \text{for $n \in \N$} $$ 
where $\nu_1$ is the outward unit normal vector for $\partial D^{(1)}$. For $n$ sufficiently large we can assume that the sequence $y_n \in \R^2 \setminus \overline{(D^{(1)} \cup D^{(2)})}$. So by the reciprocity identity we have that
$$u_\textrm{pr}^{(1)} ( x , y_n ) = u_\textrm{pr}^{(2)} ( x , y_n ) \quad \text{ for all $x \in \R^2 \setminus \overline{(D^{(1)} \cup D^{(2)})}$}.$$
This gives that by letting $x \to y^*$ we can obtain
$$ u_\textrm{pr}^{(1)} ( y^* , y_n ) = u_\textrm{pr}^{(2)} ( y^* , y_n )  \quad \text{ and by the boundary condition \eqref{vh-bvp}} \quad \frac{1}{2k^2} \Phi_k ( y^* , y_n ) = u_\textrm{pr}^{(2)} ( y^* , y_n )$$
since $y^* \in \partial D^{(1)}$. This gives a contradiction since 
$$ |\Phi_k ( y^* , y_n )| \to \infty  \quad \text{ whereas } \quad |u_\textrm{pr}^{(2)} ( y^* , y_n )| < \infty  \quad \text{ as $n \to \infty$ }$$
by elliptic regularity. This proves the claim. 
\end{proof}

We have proven that our inverse problem has the uniqueness property. This was done by exploiting the decoupling of the biharmonic scattering problem \eqref{biharmonic}--\eqref{SRC} into boundary value problems for $u_\textrm{pr}$ and $u_\textrm{ev}$ given in \eqref{vhvmeq1}--\eqref{vhvmeq3}. The main idea is to use the fact that the near--field data defined in \eqref{nf-data} is equivalent to the knowledge of $u_\textrm{pr}(x,y)$ for all $x,y \in \Gamma$ (i.e. the propagating part of the scattered field). This allowed us to work just as in the acoustic scattering case.

\subsection{Factorization Method via Far--Field Transform}\label{FM}
Now that we know our near--field data \eqref{nf-data} uniquely determines the scatterer $D$, we turn our attention to deriving a method for the reconstruction of the scatterer. Just as in the previous sections, our analysis relies heavily on the decoupling of the forward problem. Here, we derive a factorization method to recover the scatterer from the near--field operator \eqref{nf-op}. The factorization method was initially developed in \cite{firstFM} to recover both sound--soft and sound--hard obstacles from far--field data. See, e.g., \cite{fmconstant,FM-wave,FM-multifreq,regfm2,Hu-nfFM,FMgibc-obstacale} and the references therein for the factorization method applied to other acoustic scattering problems. For our problem, one of the main novelties is that we apply the factorization method to the biharmonic scattering problem \eqref{biharmonic}--\eqref{SRC}, which has yet to be rigorously studied for this or other biharmonic scattering problems. 

To begin, we first note that just as before we can exploit the decoupling of the forward problem to apply techniques that are well--established for acoustic scatterers. Since we have near--field data, we have to do some pre--processing of the near--field operator just as in \cite{nf-fft-dsm,Hu-nfFM} to obtain a symmetric factorization. Here, we transform our near--field operator into a far--field operator, motivated by the analysis in \cite{nf-fft-dsm}. Recall that we have the near--field operator 
$$(N g)(x) = \int_{\Gamma} \left[ \Delta_x u^{\text{scat}}( x , y) -k^2 u^{\text{scat}}( x , y) \right] g(y) \, \dd s(y)$$
maps $L^2(\Gamma)$ into itself. This would imply that 
$$(N g)(x) =  -2k^2 \int_{\Gamma} u_\textrm{pr} ( x , y) \, g(y) \, \dd s(y).$$
by equation \eqref{vhvm}. Thus, we notice that our biharmonic near--field operator is equivalent to the acoustic near--field operator for scattering by a sound--soft obstacle generated by an acoustic point source. This is due to the fact that $-2k^2 u_\textrm{pr}$ is the solution to the corresponding acoustic scattering problem by \eqref{vh-bvp}.

In order to derive a factorization for the near--field operator defined in \eqref{nf-op}, we can appeal to the results in \cite{nf-fft-dsm}. Therefore, we define the single layer boundary integral operator 
$$SL_{\partial D}: H^{-1/2}(\partial D) \longrightarrow H^1_{\text{loc}} (\R^2 \setminus\overline{D}) \quad \text{ given by } \quad \big(SL_{\partial D}\big) \varphi= \int_{\partial D} \Phi_k( \cdot \, ,  z ) \varphi (z) \, \text{d}s(z)$$
along with the associated inverse of the single layer potential   
$$S^{-1}_{\partial D \to \partial D} : H^{1/2}(\partial D) \longrightarrow H^{-1/2}(\partial D) \quad \text{such that } \quad  \big(S_{\partial D \to \partial D} \big) \varphi =\int_{\partial D} \Phi_k ( \cdot \, ,  z) \varphi(z) \, \text{d}s(z) \Big|_{\partial D}.$$
Here, we again let $\Phi_k( \cdot \, ,  z ) $ denote the fundamental solution for the Helmholtz equation. For convenience, we will let 
\begin{align}\label{T-operator}
T= - S^{-1}_{\partial D \to \partial D} \quad \text{as a mapping} \quad H^{1/2}(\partial D) \longrightarrow H^{-1/2}(\partial D). 
\end{align} 
By \cite[Lemma 1.14]{kirschbook} we have that $T$ is a well-defined bounded linear operator provided that $k^2$ is not a Dirichlet eigenvalue of the negative Laplacian in $D$. Now, we define the bounded linear operator 
\begin{align}\label{sl-op1}
S : L^2(\Gamma)  \longrightarrow H^{1/2}(\partial D) \subset L^2(\partial D) \quad  \text{given by} \quad   S g = \int_{\Gamma}  \Phi_k( \cdot \, , y)  g(y) \, \text{d}s(y) \Big|_{\partial D} 
\end{align}
and its associated dual--operator 
\begin{align}\label{sl-op2}
S^{\top} : L^2(\partial D)  \longrightarrow H^{1/2}(\Gamma) \subset L^2(\Gamma) \quad  \text{ given by} \quad S^{\top} \varphi = \int_{\partial D}  \Phi_k( z , \cdot)  \varphi(z) \, \text{d}s(z) \Big|_{\Gamma}
\end{align}
with respect to the bilinear $L^2$ dual--product. Since $-2k^2 u_\textrm{pr} ( \cdot \, , y)$ satisfies equations (1)--(2) in \cite{nf-fft-dsm}, we have that 
$$(N g)(x) =  -2k^2 \int_{\Gamma} u_\textrm{pr} ( x , y) g(y) \, \dd s(y)  \quad \text{ has the factorization } \quad N = S^\top T S$$
by appealing to equation (8) in \cite{nf-fft-dsm}.

The theoretical foundation for the factorization method requires a symmetric factorization of the near--field operator. As is the case with other near--field operators, we do not have this for our problem since we have $S^\top$ (i.e., the dual--operator) rather than $S^*$ (i.e., the adjoint operator) in our factorization. In order to avoid this, \cite{nf-fft-dsm} shows that the aforementioned near--field operator can be augmented to have a symmetric factorization. To this end, we define the Dirichlet--to--Far--Field operator   
\begin{align}
\mathcal{Q}: H^{1/2}(\Gamma) \longrightarrow L^2(\mathbb{S}^{1})  \quad  \text{ given by} \quad (\mathcal{Q} f)(\hat{x}) = w^{\infty}(\hat{x}), \quad \text{ for any } \; \hat{x} \in \mathbb{S}^{1}, \label{Q-operator}
\end{align}
where $w\in H^1_{\text{loc}}(\R^2 \setminus \overline{\text{Int}(\Gamma)})$ is the unique solution to
\begin{align}
\Delta w +k^2 w  =  0 \quad \text{in}  \quad \R^2 \setminus \overline{\text{Int}(\Gamma)} \quad \text{ with } \quad w|_\Gamma = f \label{eq-ext1}
\end{align} 
along with the Sommerfeld radiation condition, where $\mathbb{S}^{1}$ denotes the unit `sphere' in $\R^2$. It is well known that the far--field pattern $w^{\infty}$ exists and that $w$ satisfies the asymptotic expansion 
$$w(x)= \frac{ \mathrm{e}^{\mathrm{i}\pi/4} }{ \sqrt{8 \pi k} }  \cdot  \frac{\text{e}^{\text{i}k|x|}}{\sqrt{ |x| } } \left\{w^{\infty}(\hat{x}) + \mathcal{O} \left( \frac{1}{|x|}\right) \right\}\; \textrm{  as  } \;  |x| \to \infty,$$
where again $\hat x=x/|x|$ just as in \cite{nf-fft-dsm}. 

For completeness, we show how to use the operator $\mathcal{Q}$ to augment the near--field operator associated with our biharmonic scattering problem. Therefore, by following the calculations originally derived in \cite{nf-fft-dsm}, we have that 
$$(\mathcal{Q} \, S^{\top} \varphi) (\hat{x}) = \int_{\partial D} \text{e}^{-\text{i}k \hat{x}\cdot z} \varphi(z) \,  \text{d}s(z)  \quad  \textrm{for any } \quad \varphi \in L^2(\partial D),$$
where we have used the asymptotics of the fundamental solution $\Phi_k( \cdot \, ,  z )$. This corresponds to the adjoint of the Herglotz operator 
\begin{align}
H: L^2( \mathbb{S}^{1}) \longrightarrow L^2( \partial D)   \quad  \text{is given by } \quad  (H g)(z) = \int_{\mathbb{S}^{1}} \text{e}^{\text{i}k  z \cdot \hat{x}} g(\hat{x})  \,  \mathrm{d} s(\hat{x}) \Big|_{\partial D}, \label{H-operator}
\end{align}
where its adjoint is defined as 
$$(H^* \varphi) (\hat{x}) =  \int_{\partial D} \text{e}^{-\text{i}k \hat{x}\cdot z} \varphi(z)  \,   \text{d}s(z)  \quad  \textrm{for any } \quad \varphi \in L^2(\partial D).$$ 
This implies that $\mathcal{Q} \, S^{\top} = H^*$ and by taking the transpose of this expression, we obtain the equality $S\mathcal{Q}^\top = (H^*)^\top$ where 
$$\left( (H^*)^\top g \right) (z) = \int_{\mathbb{S}^{1}} \text{e}^{- \text{i}k  z \cdot \hat{x}} g(\hat{x})  \, \mathrm{d} s(\hat{x}) \Big|_{\partial D} .$$
To continue, in an attempt to derive a symmetric factorization we see that 
\begin{align*}
\left( (H^*)^\top g \right) (z) = \int_{\mathbb{S}^{1}} \text{e}^{- \text{i}k  z \cdot \hat{x}} g(\hat{x})  \,  \mathrm{d} s(\hat{x}) \Big|_{\partial D}= \int_{\mathbb{S}^{1}} \text{e}^{\text{i}k  z \cdot \hat{x}} g(-\hat{x})  \,  \mathrm{d} s(\hat{x}) \Big|_{\partial D}
\end{align*}
which implies that $(H^*)^\top =  H\mathcal{R}$, where the operator 
\begin{align}
\mathcal{R}:  L^2( \mathbb{S}^{1}) \longrightarrow L^2(\mathbb{S}^{1})\quad  \text{is given by } \quad (\mathcal{R}g)(\hat{x}) = g(-\hat{x}). \label{R-operator}
\end{align}
We see that by the definition \eqref{R-operator} that $\mathcal{R}=\mathcal{R}^{-1}$. By appealing to the factorization $N = S^\top T S$ and the above calculations we have the following result. 

\begin{theorem} 
The near--field operator defined by equation \eqref{nf-op} associated with the biharmonic scattering problem  \eqref{biharmonic}--\eqref{SRC} satisfies 
\begin{align} \label{FFT}
\mathcal{Q} N \mathcal{Q}^\top \mathcal{R} =H^*TH \quad \text{ such that } \quad \mathcal{Q} N \mathcal{Q}^\top \mathcal{R} :  L^2( \mathbb{S}^{1}) \longrightarrow L^2(\mathbb{S}^{1}),
\end{align}
where the bounded linear operators $T$, $\mathcal{Q}$, $H$, and $\mathcal{R}$ are defined in \eqref{T-operator}, \eqref{Q-operator},  \eqref{H-operator}, and \eqref{R-operator} respectively, provided that $k^2$ is not a Dirichlet eigenvalue of the negative Laplacian in $D$.
\end{theorem}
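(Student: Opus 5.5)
The identity follows by composing the factorization $N = S^\top T S$ with $\mathcal{Q}$ on the left and $\mathcal{Q}^\top\mathcal{R}$ on the right, and then using the two operator identities derived just before the statement, namely $\mathcal{Q}S^\top = H^*$ and $S\mathcal{Q}^\top = (H^*)^\top = H\mathcal{R}$. Concretely, I would write
\begin{align*}
\mathcal{Q} N \mathcal{Q}^\top \mathcal{R} = (\mathcal{Q} S^\top)\, T\, (S \mathcal{Q}^\top)\, \mathcal{R} = H^* T H \mathcal{R}\mathcal{R} = H^* T H,
\end{align*}
where the last step uses $\mathcal{R}^2 = I$ on $L^2(\mathbb{S}^1)$. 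The assumption that $k^2$ is not a Dirichlet eigenvalue of $-\Delta$ in $D$ enters only through the invertibility of $S_{\partial D\to\partial D}$. That invertibility makes $T$ well defined and bounded by \cite[Lemma 1.14]{kirschbook}, and it also justifies the factorization $N = S^\top T S$ imported from \cite{nf-fft-dsm}. The factorization itself rests on $N g = -2k^2\int_\Gamma u_{\rm pr}(\cdot,y)g(y)\,\dd s(y)$ together with the boundary value problem \eqref{vh-bvp}.

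The key steps, in order, are the following.
\begin{enumerate}
\item Recall $N=S^\top TS$. Here $S g$ is the trace on $\partial D$ of the Helmholtz single layer over $\Gamma$, and $TSg$ is the density whose single layer on $\partial D$ equals $-Sg$ there. The resulting radiating field is the scattered field from the incident field $\int_\Gamma\Phi_k(\cdot,y)g(y)\,\dd s(y)$, and $S^\top$ evaluates it on $\Gamma$.
\item Verify $\mathcal{Q}S^\top = H^*$. For $\varphi\in L^2(\partial D)$, the function $w=\int_{\partial D}\Phi_k(\cdot,z)\varphi(z)\,\dd s(z)$ is a radiating solution outside $\overline{\mathrm{Int}(\Gamma)}$, and its trace on $\Gamma$ is $S^\top\varphi$. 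By uniqueness of the exterior Dirichlet problem \eqref{eq-ext1}, $\mathcal{Q}S^\top\varphi$ equals the far field of $w$. That far field is $\int_{\partial D} e^{-\mathrm{i}k\hat{x}\cdot z}\varphi(z)\,\dd s(z)$ by the asymptotics of $\Phi_k$ with the stated normalization.
\item Take transposes with respect to the bilinear $L^2$ pairings to get $S\mathcal{Q}^\top=(H^*)^\top$. Then use the change of variables $\hat{x}\mapsto-\hat{x}$ on $\mathbb{S}^1$ to get $(H^*)^\top=H\mathcal{R}$.
\item Compose as in the display above.
\end{enumerate}

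The main point needing care is step 3. Transposing a composition of operators between different spaces ($H^{1/2}(\Gamma)$, $L^2(\Gamma)$, $L^2(\partial D)$, $L^2(\mathbb{S}^1)$) requires being precise about the dual pairings. $\mathcal{Q}^\top$ should be understood as a map $L^2(\mathbb{S}^1)\to H^{-1/2}(\Gamma)$, and $S$ must be extended to act on $H^{-1/2}(\Gamma)$, which is harmless since $\Gamma$ and $\partial D$ are disjoint and the kernel is smooth there. The concern is that $N\mathcal{Q}^\top$ must make sense. To handle this, I would note that $N$ has a smooth kernel on $\Gamma\times\Gamma$, since $u_{\rm pr}(x,y)$ is smooth for $x,y\in\Gamma$, so $N$ extends boundedly from $H^{-1/2}(\Gamma)$ to $H^{1/2}(\Gamma)$. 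The composition is then well defined, and the identity $N=S^\top TS$ persists on the extension by density.
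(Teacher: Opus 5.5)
Your proposal is correct and follows the paper's argument: you insert $N=S^\top TS$, use $\mathcal{Q}S^\top=H^*$ (from the far-field asymptotics of $\Phi_k$) and its transpose $S\mathcal{Q}^\top=(H^*)^\top=H\mathcal{R}$, and cancel $\mathcal{R}^2=I$. Your remarks on the dual pairings (viewing $\mathcal{Q}^\top$ as mapping into $H^{-1/2}(\Gamma)$, and extending $N$ and $S$ there via their smooth kernels) add rigor that the paper omits.
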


With this result, we see from \eqref{FFT} that the augmented operator $\mathcal{Q} N \mathcal{Q}^\top \mathcal{R}$ does indeed have a symmetric factorization. This implies that this post--processing can be used to symmetrize the factorization and apply the results found in \cite{firstFM,kirschbook}. Note that this is similar to the idea presented in \cite{Hu-nfFM}, where a so--called outgoing-to-incoming operator is used to symmetrize the factorization of the near--field operator associated with acoustic problems. Notice that the augmented operator $\mathcal{Q} N \mathcal{Q}^\top \mathcal{R}$ is the far--field operator for a sound--soft acoustic scatterer by \cite[equation (1.55)]{kirschbook}.

Now, we can show that the factorization method can be applied to our inverse problem. We recall that using the bounded linear operators $\mathcal{Q}$ in \eqref{Q-operator} and $\mathcal{R}$ in \eqref{R-operator}, our augmented operator $\mathcal{Q} N \mathcal{Q}^\top \mathcal{R}$ is the far--field operator for a sound--soft acoustic scatterer. Therefore, we can use \cite[Theorem 1.25]{kirschbook} to obtain the factorization method for recovering the simply supported obstacle.

\begin{theorem} 
Assume that $k^2$ is not a Dirichlet eigenvalue of the negative Laplacian in $D$. Then we have that 
\begin{align} \label{FMresult}
z \in D \quad \text{if and only if } \quad \sum\limits_{j=1}^{\infty} \frac{1}{|\lambda_j| }  \left| \left( \mathrm{e}^{-\mathrm{i}k\hat{x}\cdot z} \, , \, \psi_j(\hat{x}) \right)_{L^2(\mathbb{S}^1)} \right|^2 < \infty 
\end{align}
with $(\lambda_j , \psi_j) \in \C_{\neq 0} \times L^2(\mathbb{S}^1)$ the eigenvalues and vectors for the compact normal operator $\mathcal{Q} N \mathcal{Q}^\top \mathcal{R}$.
\end{theorem}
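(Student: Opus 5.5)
The plan is to reduce the statement to the classical factorization method for a sound--soft obstacle with far--field data, \cite[Theorem 1.25]{kirschbook}, using the factorization \eqref{FFT}.

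\textbf{Step 1: identify the operator and verify the Kirsch hypotheses.} Set $F := \mathcal{Q} N \mathcal{Q}^\top \mathcal{R}$. By \eqref{FFT}, $F = H^* T H$ with $T = -S^{-1}_{\partial D \to \partial D}$. The computations preceding \eqref{FFT} show that $F$ coincides with the far--field operator of the sound--soft problem for $D$ at wave number $k$; compare \cite[equation (1.55)]{kirschbook}. It therefore inherits the standard properties: it is compact, and it is normal because $\mathcal{S}=I+\frac{\mathrm{i}k}{4\pi}\cdot$ (the appropriate constant) $F$ is unitary. Rather than re-deriving these, I would quote them from \cite{kirschbook} once the identification is made. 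The one point to check is that the normalization of $H$ in \eqref{H-operator} (no prefactor) and the far--field constant in the expansion for $w$ match Kirsch's conventions up to a positive multiple.

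\textbf{Step 2: apply the range identity.} The normality of $F$, together with the properties of $T$, lets one invoke Kirsch's $F_\#$ / $(F^*F)^{1/4}$ range identity, in the form $\mathrm{Range}\big((F^*F)^{1/4}\big) = \mathrm{Range}(H^*)$. The needed properties of $T$ are:
\begin{itemize}
\item the imaginary part of $T$ is sign--definite on the relevant subspace;
\item $\mathrm{Re}\,T$ is coercive plus compact, that is, $-S_{\partial D\to\partial D}$ for $k=\mathrm{i}$ is coercive and the difference from $S$ at $k$ is compact;
\item $H$ is injective with dense range, which uses that $k^2$ is not a Dirichlet eigenvalue.
\end{itemize}
All of these are established in \cite[Chapter 1]{kirschbook} under the standing hypothesis on $k^2$. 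The eigen--decomposition $(\lambda_j,\psi_j)$ of the compact normal $F$ gives
\[
(F^*F)^{1/4}\psi_j=|\lambda_j|^{1/2}\psi_j .
\]
Picard's criterion then states that $\phi_z \in \mathrm{Range}\big((F^*F)^{1/4}\big)$ if and only if $\sum_j |\lambda_j|^{-1}|(\phi_z,\psi_j)|^2<\infty$.

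\textbf{Step 3: characterize $D$.} It remains to show that $\phi_z(\hat x)=\mathrm{e}^{-\mathrm{i}k\hat x\cdot z}\in \mathrm{Range}(H^*)$ if and only if $z\in D$.
\begin{itemize}
\item If $z\in D$: the function $\Phi_k(\cdot,z)$ is a radiating solution outside $D$ with far field $\phi_z$ (in the normalization above). Writing it as a single layer potential with density $\varphi\in H^{-1/2}(\partial D)$, which is possible by invertibility of $S_{\partial D\to\partial D}$, gives $H^*\varphi=\phi_z$. Here $H^*$ is interpreted on $H^{-1/2}(\partial D)$, as in Kirsch.
\item If $z\notin D$: the standard singularity argument applies. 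Rellich's lemma and unique continuation would make $\Phi_k(\cdot,z)$ equal to a field smooth near $z$, which is a contradiction; for $z\in\partial D$ one uses that $\Phi_k(\cdot,z)\notin H^1$ near $z$.
\end{itemize}

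The main obstacle is purely bookkeeping in Step 1: confirming that the $F$ built from $\mathcal{Q}$ and $\mathcal{R}$ is literally Kirsch's far--field operator, constants included, and hence normal. If the constants are off by a factor, one must confirm the factor is a positive real. A positive real factor leaves normality and the finiteness of the series unchanged, and a complex unimodular factor would also be harmless for the criterion. Everything else is direct citation.
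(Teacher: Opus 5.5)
Your proposal is correct and is essentially the paper's argument. The paper likewise identifies $\mathcal{Q} N \mathcal{Q}^\top \mathcal{R}=H^*TH$ with the sound--soft far--field operator via \cite[equation (1.55)]{kirschbook} and then simply invokes \cite[Theorem 1.25]{kirschbook}, which packages the range identity $\mathrm{Range}\big((F^*F)^{1/4}\big)=\mathrm{Range}(H^*)$, Picard's criterion, and the test $\mathrm{e}^{-\mathrm{i}k\hat x\cdot z}\in\mathrm{Range}(H^*)\iff z\in D$; you just unpack these ingredients in more detail. Two small points, neither affecting correctness: it is the single layer operator at wave number $\mathrm{i}$ itself (not its negative) that is coercive, so $\mathrm{Re}\,T$ is negative coercive plus compact, which is harmless since one may pass to $-F$; and with the paper's normalization the far field of $\Phi_k(\cdot\,,z)$ is exactly $\mathrm{e}^{-\mathrm{i}k\hat x\cdot z}$, so the constants you worry about in Step 1 match with no extra factor.
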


This result is given by deriving a range identity for the obstacle and using Picard's criteria to connect the obstacle with the spectral data for $\mathcal{Q} N \mathcal{Q}^\top \mathcal{R}$. From equation \eqref{FMresult} we see that the augmented operator $\mathcal{Q} N \mathcal{Q}^\top \mathcal{R}$ can be used to recover the region $D$. To this end, notice that by \eqref{FMresult} this result would imply that 
\begin{align} \label{FMnumeric}
z \in D \quad \text{if and only if } \quad W(z) = \left[\sum\limits_{j=1}^{\infty} \frac{1 }{|\lambda_j| }  \left| \left( \mathrm{e}^{-\mathrm{i}k\hat{x}\cdot z} \, , \, \psi_j(\hat{x}) \right)_{L^2(\mathbb{S}^1)} \right|^2 \right]^{-1} >0. 
\end{align}
Again, we assume that $(\lambda_j , \psi_j) \in \C_{\neq 0} \times L^2(\mathbb{S}^1)$ are the eigenvalues and vectors for the operator $\mathcal{Q} N \mathcal{Q}^\top \mathcal{R}$. Since the near--field operator $N$ is known and the operators $\mathcal{Q}$ and $\mathcal{R}$ can be precomputed, $W(z)$ defined in \eqref{FMnumeric} is computable from the data. Therefore, from the near--field data we can compute $W(z)$ and provide the contour plot to recover $D$. 

Notice that since $N$ is compact, this would imply that $\mathcal{Q} N \mathcal{Q}^\top \mathcal{R}$ is compact as well. This would imply that the eigenvalues $|\lambda_j| \to 0$ rapidly as $j \to \infty$. By the definition of the imaging functional $W(z)$, we see that dividing by the eigenvalues could cause numerical instabilities when trying to recover $D$. In order to ensure stability as well as a rigorous characterization of the scatterer, researchers have worked to add regularization to the factorization method; see, e.g., \cite{GLSM,regfm2,RegFM}. In particular, if we assume that the measured near--field operator $N^\delta$ has added random error such that 
$$ \| N^\delta - N \|_{\mathcal{L}( L^2(\Gamma))} = \mathcal{O}(\delta) \quad \text{implies} \quad  \|\mathcal{Q}  (N^\delta - N)\mathcal{Q}^\top \mathcal{R} \|_{\mathcal{L}( L^2(\mathbb{S}^1) )} = \mathcal{O}(\delta)  \quad \text{as $\delta \to 0$.}$$ 
This would be the case in many physical applications, where the measured data is polluted with random noise. 

In order to have a stable reconstruction with noisy near--field data, the main result in \cite{regfm2} implies that if we define the regularized imaging functional 
\begin{align} \label{FMnumeric2}
W_{\text{Reg}}(z;\alpha,\delta) = \left[\sum\limits_{j=1}^{\infty}  \frac{ \phi^2 \big(|\lambda^\delta_j| , \alpha \big)}{|\lambda^\delta_j| } \left|\left( \mathrm{e}^{-\mathrm{i}k\hat{x}\cdot z} \, , \, \psi^\delta_j(\hat{x}) \right)_{L^2(\mathbb{S}^1)} \right|^2 \right]^{-1} ,
\end{align}
where $(\lambda^\delta_j , \psi^\delta_j) \in \C_{\neq 0} \times L^2(\mathbb{S}^1)$ are the eigenvalues and vectors for the perturbed operator $\mathcal{Q} N^\delta \mathcal{Q}^\top \mathcal{R}$, then we have that 
$$z \in D \quad \text{if and only if } \quad \liminf\limits_{\alpha \to 0^+}\liminf\limits_{\delta \to 0^+} W_{\text{Reg}}(z;\alpha,\delta)>0.$$
Here, it is assumed that $\phi (t, \alpha ) $ is a non--negative continuous filter function derived from a regularization scheme such that 
\begin{align} \label{filter-assumptions}
\lim\limits_{\alpha \to 0} \phi(t, \alpha ) = 1, \quad \phi(t , \alpha ) \leq C_{\text{Reg}}  \quad \text{ and } \quad  \phi(t, \alpha ) \leq C_{\alpha} t \quad \text{for all} \,\, t, \alpha>0,
\end{align}
where the constant $C_{\text{Reg}}$ is independent of the regularization parameter $\alpha$. 
An example of such a filter function comes from Tikhonov regularization where 
$$\phi (t, \alpha ) =  \frac{t^2}{t^2+\alpha} \quad \text{ for all } \quad t,\alpha>0.$$ 
By appealing to the continuity of the spectrum it can be shown that for any $z$ 
$$\liminf\limits_{\alpha \to 0^+}\liminf\limits_{\delta \to 0^+} W_{\text{Reg}}(z;\alpha,\delta) >0  \quad \text{if and only if} \quad W(z)>0 .$$ 
So in the case of noisy near--field measurements, it would be optimal to use the regularized imaging functional $W_{\text{Reg}}(z;\alpha,\delta)$ defined in \eqref{FMnumeric2} to recover the scatterer. Most recently, this regularized variant of the factorization method we used in \cite{GMR} to recover a strain--gradient interface in a thin elastic body. 

One observation is that the operators $\mathcal{Q}$ and $\mathcal{R}$ are defined without the knowledge of $D$. This implies that they can be computed without any a priori knowledge of the scatterer. Indeed, for the operator 
$$(\mathcal{R}g)(\hat{x}) = g(-\hat{x})$$
we first note that 
$$\hat x = \big(\cos (\theta) \, , \, \sin (\theta) \big) \quad \text{ for } \quad \theta \in [0, 2\pi).$$
Therefore, we can consider $g$ as given by  
$$g(\hat x) = g\big( (\cos\theta \, , \, \sin \theta) \big)$$ 
which can be viewed as a function of the angular variable, i.e. $g(\theta)$. Notice that the sum of angles formulas gives 
$$- \hat x =  \big(\cos (\theta+\pi) \, , \, \sin (\theta + \pi) \big) \quad \text{ for } \quad \theta \in [0, 2\pi).$$
With this, we obtain that $(\mathcal{R}g)(\theta) = g(\theta+\pi)$ and by some simple calculations  
$$(\mathcal{R} g)(\theta) = \int\limits_0^{2\pi} R_{func} (\theta, \phi) g(\phi)  \,  \text{d}{\phi} \quad \text{where} \quad R_{func}(\theta, \phi) = \frac{1}{2\pi} \sum_{|m| = 0}^{\infty} \text{e}^{\text{i}m(\theta - \phi + \pi)}$$
by using a Fourier series to express $g$ as is done in \cite{nf-fft-dsm}. 

Now, in order to pre--compute operator $\mathcal{Q}$, we will assume that $\Gamma=\partial B_R$ where $B_R$ is the ball centered at the origin of radius $R>0$. Similar to the above, we have that 
$$ x \in \Gamma \quad \text{ is given by } \quad x=R (\cos \theta \, , \, \sin \theta) \quad \text{ for } \quad \theta \in [0, 2\pi).$$
Therefore, the corresponding Dirichlet data in \eqref{eq-ext1} can be seen as a function depending on the angular variable such that 
$$f(\theta) = f\big( R (\cos\theta \, , \, \sin \theta) \big) \quad \text{ for } \quad \theta \in [0, 2\pi).$$
By using separation of variables to find an analytical solution for \eqref{eq-ext1} (see \cite{postdocpaper} for details) we have that the Dirichlet--to--Far--Field operator can be written as 
$$ (\mathcal{Q} f)(\theta) = \int\limits_0^{2\pi} Q_{func} (\theta, \phi) f(\phi)  \,  \text{d}{\phi} \quad \text{where} \quad Q_{func} (\theta, \phi) = \frac{ 2 }{\pi \text{i}} \sum_{|m| = 0}^{\infty} \frac{\text{e}^{\text{i}m(\theta - \phi - \pi/2)}}{H^{(1)}_{m}(kR)}$$
with $H^{(1)}_{m}$ being the first kind Hankel function of order $m$. Even though the kernel functions used to defined $\mathcal{Q}$ and $\mathcal{R}$ are given as an infinite series, it has been shown in \cite{nf-fft-dsm,postdocpaper} that these operators can be approximated well by a truncated series.

{\color{black} {\bf Remark:} For the case when $\Gamma \neq \partial B_R$ we can define Dirichlet--to--Far-Field operator $\mathcal{Q}$ via integral equations. Indeed, we can make the ansatz that the scattered field in \eqref{eq-ext1} can be written as a double--layer integral operator 
$$DL_{\Gamma}: H^{1/2}(\Gamma) \longrightarrow H^1_{\text{loc}} \big(\R^2 \setminus\overline{\text{Int}(\Gamma}) \big)\quad \text{ given by } \quad \big(DL_{\Gamma}\big) \psi = \int_{\Gamma} \dnuz \Phi_k( \cdot \, ,  z ) \psi (z) \, \text{d}s(z).$$
In order to satisfy the boundary condition on $\Gamma$ in \eqref{eq-ext1}, we have that the potential $\psi$ satisfies
$$(I+ 2D_{\Gamma \to \Gamma})\psi=2f \quad \text{ which implies that } \quad \psi = 2(I+ 2D_{\Gamma \to \Gamma})^{-1}f$$
provided that $-k^2$ is not a Neumann eigenvalue of $\Delta$ in Int$(\Gamma)$. Here, we have used the jump relation for the integral operator and 
$$(D_{\Gamma \to \Gamma})\psi = \int_{\Gamma} \dnuz \Phi_k( \cdot \, ,  z ) \psi (z) \, \text{d}s(z)\Big|_{\Gamma}.$$
We refer to \cite{coltonkress,mclean} for in--depth analysis on the boundary operator discussed here. Now, as in \cite{DSM-BH24} we have that the solution to \eqref{eq-ext1} is given by 
$$ 2\big(DL_{\Gamma}\big)(I+ 2D_{\Gamma \to \Gamma})^{-1} f \quad \text{ which implies that } \quad \mathcal{Q} f= 2\big(DL^{\infty}_{\Gamma}\big)(I+ 2D_{\Gamma \to \Gamma})^{-1}f$$
where 
$$\big(DL^{\infty}_{\Gamma}\big) \psi =  \int_{\Gamma} \dnuz \text{e}^{-\text{i}k\hat{x}\cdot z}  \psi(z) \, \text{d}s(z)= -\text{i}k \int_{\Gamma} \nu(z)\cdot \hat{x} \text{e}^{-\text{i}k\hat{x}\cdot z}  \psi(z) \, \text{d}s(z)$$ 
is the far--field pattern for the double--layer integral operator approach. We will also note that there are other ways to define the 
Dirichlet--to--Far-Field operator $\mathcal{Q}$ as well as the transformation of the near--field data to far--field data for acoustic scatterers \cite[Lemma 2.13]{kirschbook}.

\section{Numerical Experiments}\label{numerics}
In this section, we will provide numerical reconstructions using the regularized imaging functional defined in \eqref{FMnumeric2}. We will see that this method provides reconstructions of simply supported obstacles from noisy near--field data. From our numerical experiments we see that the post--processing via the far--field transformation leads to a stable indicator for recovering the scatterer. To this end, we first generate synthetic data by solving the direct scattering problem using boundary integral equations. Then from the approximated solution we can form a discrete approximation of the near--field operator \eqref{nf-op}. By then computing the eigenvalue decomposition of the discrete operator $\mathcal{Q}N^\delta\mathcal{Q}^\top\mathcal{R}$ we can evaluate imaging functional $W_{\mathrm{Reg}}(z)$ given in equation \eqref{FMnumeric2} to produce reconstructions of $D$.

\subsection{Boundary Integral Equations for the Scattering Problem}
To begin, we need to approximate the solution to the scattering problem \eqref{biharmonic}--\eqref{SRC}. Recall that the boundary conditions for the scattering problem is given by 
$$u^{\text{scat}}=-\mathbb{G}( \cdot \, ,y) \quad \text{and} \quad  \Delta u^{\text{scat}}=-\Delta \mathbb{G}(\cdot \, ,y) \quad \text{for $y\in \Gamma$.}$$
Here $\Gamma$ is some given boundary curve surrounding the scatterer $D$. The fundamental solution for the biharmonic Helmholtz equation $\mathbb{G}(x ,y)$ is again given by 
$$\mathbb{G}(x,y) = -\frac{1}{2k^2}\left[\Phi_k(x,y)-\Phi_{\mathrm{i}k}(x,y)\right]$$
with $\Phi_\tau (x,y)=\frac{\mathrm{i}}{4}H^{(1)}_0(\tau|x - y|)$ denoting the fundamental solution of the Helmholtz equation with wave number $\tau$.
This now implies that 
$$\Delta_x \mathbb{G}(x,y) = \frac{1}{2}\left[\Phi_k(x,y)+\Phi_{\mathrm{i}k}(x,y)\right]$$
Using operator splitting discussed in Section \ref{dp}, we have $u^{\text{scat}}=u_{\text{pr}}+u_{\text{ev}}$, where $u_{\text{pr}}$ solves the Helmholtz equation outside $D$ and $u_{\text{ev}}$ solves the modified Helmholtz equation outside $D$.

Using a single--layer ansatz (similar to \cite{clampedTEexist}), gives
\begin{eqnarray}
 u_{\text{pr}}(x)&=&\mathrm{SL}_{k}\psi(x)\,,\quad x\in\mathbb{R}^2 \backslash \overline{D}\,,\label{start1}\\
 u_{\text{ev}}(x)&=&\mathrm{SL}_{\mathrm{i}k}\varphi(x)\,,\quad x\in\mathbb{R}^2 \backslash \overline{D}\,,
 \label{start2}
\end{eqnarray}
where the single layer operator is defined by
\begin{eqnarray*}
\mathrm{SL}_{k}\phi(x)&=&\int_{\partial D}\Phi_k(x,y)\phi(y)\,\mathrm{d}s(y)\,,\quad  x\notin \partial D\,,
\end{eqnarray*}
where as above $\Phi_\tau (x,y)$ denotes the fundamental solution of the Helmholtz equation in two dimensions with wave number $\tau$. Since both single--layer integral operators satisfy  the PDEs and radiation conditions, we only need to satisfy the boundary conditions to approximate the scattered field. Therefore, letting $x$ approach the boundary in (\ref{start1}) and (\ref{start2}) along with the jump conditions (see \cite[Theorem 3.1]{coltonkress}) and using the first boundary condition, yields the first integral equation 
\begin{eqnarray}
 \mathrm{S}_{k}\psi(x)+\mathrm{S}_{\mathrm{i}k}\varphi(x)=-\mathbb{G}(x,y)\,,\quad x\in\partial D\,,\label{start1a}
\end{eqnarray}
where the single--layer boundary integral operator is defined by
\begin{eqnarray*}
    \mathrm{S}_{k}\phi(x)&=&\int_{\partial D}\Phi_k(x,y)\phi(y)\,\mathrm{d}s\,,\quad x\in \partial D\,.
\end{eqnarray*}
Notice that, since we have
$$\Delta \mathrm{SL}_{k}\psi(x)=-k^2 \mathrm{SL}_{k}\psi(x) \quad \text{and} \quad  \Delta \mathrm{SL}_{\mathrm{i}k}\psi(x) = k^2 \mathrm{SL}_{\mathrm{i}k}\psi(x) $$
for all $x\in\mathbb{R}^2 \backslash \overline{D}$, we can write with the second boundary condition
\begin{eqnarray}
-k^2 \mathrm{S}_{k}\psi+k^2 \mathrm{S}_{\mathrm{i}k}\psi=-\Delta \mathbb{G}(x,y) \,,\quad x\in\partial D\,,\label{start1b}
\end{eqnarray}
Combining \eqref{start1a}--\eqref{start1b} yields the following system of boundary integral equations
\begin{eqnarray*}
 \begin{pmatrix}
 \mathrm{S}_{k} & \mathrm{S}_{\mathrm{i}k}\\
 -k^2 \mathrm{S}_{k} & k^2 \mathrm{S}_{\mathrm{i}k}
\end{pmatrix}
 \begin{pmatrix}
 \psi\\
 \varphi
 \end{pmatrix}
 =-
 \begin{pmatrix}
 \mathbb{G}(\cdot \, ,y)\\
 \Delta \mathbb{G}(\cdot \, ,y)
 \end{pmatrix}\,
\end{eqnarray*}
on the boundary of the scatterer.

The above system of boundary integral equations can be solved explicitly. We have
\begin{eqnarray}\psi=\frac{1}{2k^2}S_k^{-1}\Phi_k(\cdot ,y) 
\label{los1}
\end{eqnarray}
\text{ and } 
\begin{eqnarray}
\varphi=-\frac{1}{2k^2}S_{\mathrm{i}k}^{-1}\Phi_{\mathrm{i}k}(\cdot ,y)\,.
\label{los2}
\end{eqnarray}
Note that both $S_k^{-1}$ and $S_{\mathrm{i}k}^{-1}$ exist since $k^2$ and $-k^2$ are assumed not to be Dirichlet eigenvalues of $-\Delta$ in $D$. After we solve this system for $\psi$ and $\varphi$, we can insert this into our ansatz to compute $u^{\text{scat}}$ on $\Gamma$
\begin{eqnarray*}
u^{\text{scat}}(x,y)&=&\mathrm{SL}_{k}\psi(x)+\mathrm{SL}_{\mathrm{i}k}\varphi(x)\\
&=&\frac{1}{2k^2}\left(\mathrm{SL}_{k}S_k^{-1}\Phi_k(x,y)-\mathrm{SL}_{\mathrm{i}k}S_{\mathrm{i}k}^{-1}\Phi_{\mathrm{i}k}(x,y)\right)\,,\quad x\in \Gamma
\end{eqnarray*}
Likewise, we can compute the Laplacian of $u^{\text{scat}}$ on $\Gamma$
\begin{eqnarray*}
\Delta_x u^{\text{scat}}(x,y)&=&-k^2\mathrm{SL}_{k}\psi(x)+k^2\mathrm{SL}_{\mathrm{i}k}\varphi(x)\\
&=&\frac{1}{2}\left(-\mathrm{SL}_{k}S_k^{-1}\Phi_k(x,y)+\mathrm{SL}_{\mathrm{i}k}S_{\mathrm{i}k}^{-1}\Phi_{\mathrm{i}k}(x,y)\right)\,,\quad x\in \Gamma\,.
\end{eqnarray*}
The details of the discretization to compute an approximation to $\psi$ given by \eqref{los1} and $\varphi$ given by \eqref{los2} are given in \cite[Section 4.3]{kleefeldhs}.

\subsection{Numerical Reconstructions}
Now, we are ready to provide some numerical examples that have been implemented in \texttt{MATLAB}. To this end, we will assume that we have the synthetic near--field data associated with the scattering problem \eqref{biharmonic}--\eqref{SRC}. Here, the synthetic near--field data is computed via the boundary integral equations discussed in the previous section. With this, we will take 
$$\Gamma = \partial B_R \,\,\, \text{with $R=3$ unless otherwise stated} \quad  \text{and} \quad k=2$$
for all examples. This implies that the sources and receivers are located at the points 
$$x_j=y_j = R \big( \cos \theta_j \, , \sin \theta_j) \quad \text{for 64 equally spaced points  $\theta_j  \in [0 , 2\pi )$. }$$ 
Therefore, we have that our synthetic noisy near--field data is given by 
$$u^{\text{scat},\delta} (x_i , y_j)= u^{\text{scat}}(x_i , y_j) \big(1 + \delta E_1({i,j}) \big) \quad \text{and} \quad  \Delta_x u^{\text{scat},\delta} (x_i , y_j) =  \Delta_x u^{\text{scat}} (x_i , y_j) \big(1 + \delta E_2({i,j}) \big),$$
where the matrices $E_j$ for $j=1,2$ have random complex--valued entries of size $64 \times 64$ with unit 2--norm i.e. $\|E_j\|_{2} = 1$. Here, we define 
$${\bf N} = \big[ \Delta_x u^{\text{scat},\delta} (x_i , y_j) - k^2 u^{\text{scat},\delta} (x_i , y_j) \big]_{i,j = 1}^{64}$$
which is the discretized version of the near--field operator defined in \eqref{nf-op}.

In order to apply the reconstruction method in \eqref{FMresult}, we need to discretize the operators $\mathcal{Q}$ and $\mathcal{R}$. The discretization of the operators is computed via 
$$ {\bf Q} = \big[ Q_{func} (\theta_{i} , \theta_{j}) \big]_{i,j = 1}^{64} \quad \text{and} \quad {\bf R} = \big[ R_{func} (\theta_{i} , \theta_{j}) \big]_{i,j = 1}^{64}.$$
Note that we define the kernel functions as 
$$Q_{func} (\theta, \phi) = \frac{ 2 }{\pi \text{i}} \sum_{|m| = 0}^{10} \frac{\text{e}^{\text{i}m(\theta - \phi - \pi/2)}}{H^{(1)}_{m}(kR)}  \quad \text{and} \quad R_{func}(\theta, \phi) = \frac{1}{2\pi} \sum_{|m| = 0}^{10} \text{e}^{\text{i}m(\theta - \phi + \pi)}$$
which is the truncated series approximation of the kernel functions used to define the operators in \eqref{Q-operator} and \eqref{R-operator}. With this, we have to compute the imaging functional $W_{\text{Reg}}(z)$ given in \eqref{FMnumeric2} 
$$ {\bf Q} {\bf N} {\bf Q}^\top {\bf R}  \quad \text{and} \quad {\bf b}_z =[ \text{e}^{-\text{i}k z \cdot \hat{x}_1} , \cdots , \text{e}^{-\text{i}k z \cdot \hat{x}_{64}} ]^\top $$
with $\hat{x}_{i} = \big( \cos \theta_{i} \, , \sin \theta_{i} \big)$ with $i=1, \cdots, 64$. Lastly, we consider three different regularization filter functions in the definition of $W_{\text{Reg}}(z)$. For our examples we will use the filter functions 
\begin{align}\label{filters}
 \phi_{\text{Tik}}(t ; \alpha) =  \frac{t^2}{t^2+\alpha}, \,\, \,  \phi_{\text{GLSM}}(t ; \alpha) = \frac{t}{\alpha + t} \,\, \textrm{ and } \,\,  \displaystyle{  \phi_{\text{cut--off}}(t ; \alpha)= \left\{\begin{array}{lr} 1, &  t^2\geq \alpha,  \\
 				&  \\
 0,&  t^2 < \alpha
 \end{array} \right.}  .
\end{align}
These filter functions correspond to Tikhonov regularization, Generalized Linear Sampling Method (GLSM, see \cite{GLSM} for details) and Spectral/SVD cut--off, respectively. 

We can now recover scatterers with the synthetic data. We will consider three scattering regions given by a star, peanut and kite shaped simply supported obstacle. In Table \ref{scatterers}, we give the parameterizations for each of the scatterers under consideration. With this, we will provide numerical examples, where $W_{\text{Reg}}(z)$ is computed via 
$$W_{\text{Reg}}(z)= \left[ \sum\limits_{j=1}^{64} \frac{\phi^2( | \lambda_j  | ; \alpha)}{ | \lambda_j | } \big|{\bf u}_j \cdot {\bf b}_z \big|^2 \right]^{-1}$$
using the built-in \texttt{eig} solver in \texttt{MATLAB} to compute $\lambda_j$ and ${\bf u}_j$, i.e., the eigenvalues and vectors of $ {\bf Q} {\bf N} {\bf Q}^\top {\bf R}$. We use an equally spaced 200$\times$200 sampling grid in the region $[-3,3]^2$, where the imaging functional is normalized to have unit $\infty$--norm. 

\begin{table}[H]
	\centering
	\begin{tabular}{ l l }
		Scatterer     &  Parameterization \\
\hline 
\hline
		\vspace{-2.0ex}\\
		Star--shaped  & $x(t)={\displaystyle 0.25(0.3\cos(5t)+2) \big(\cos(t),\sin(t) \big)^\top}$
		\vspace{1.5ex} \\ 
		Peanut--shaped   & $x(t)={\displaystyle 2\sqrt{0.5\sin(t)^2+0.1\cos(t)^2} \big(\cos(t),\sin(t) \big)^\top}$  
		\vspace{1.5ex}\\ 
		Kite--shaped  & $x(t)={\displaystyle \big(-1.5\sin(t),\cos(t)+0.65\cos(2t)-0.65\big)^\top}$
		\vspace{1.5ex}\\
\hline
\end{tabular}\caption{The boundary parameterizations of $\partial D=x(t)$ for $t\in [0,2\pi)$.} \label{scatterers}
\end{table}

Here, in Figure \ref{fig:scatterers} we provide a visualization of the scatterers defined in Table \ref{scatterers}. We see that the scatterers are non--convex as well as having different sizes. This is to test the strength of the reconstruction algorithm. In our preceding numerical examples, we see that we are able to accurately recover each of the scatterers. We will test how the imaging functional $W_{\text{Reg}}(z)$ depends on the filter function $\phi$, regularization parameter $\alpha$, and added error $\delta$. Also, we will see if we can recover the scatterer only given the measured scattered field (i.e., the Laplacian is not known).

\begin{figure}[h]
	\centering	
	\subfigure[Star--shaped scatterer]{\includegraphics[width=0.32\textwidth]{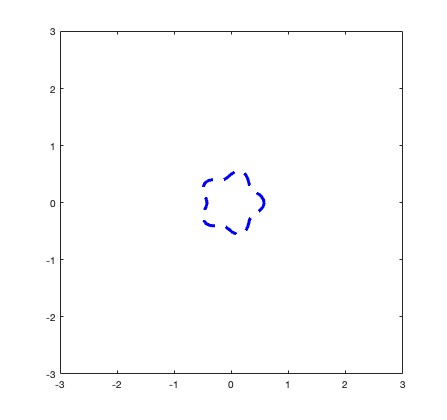}} 
	\subfigure[Peanut--shaped scatterer]{\includegraphics[width=0.32\textwidth]{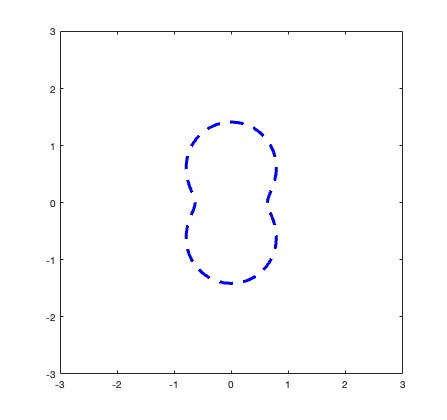}}
	\subfigure[Kite--shaped scatterer]{\includegraphics[width=0.32\textwidth]{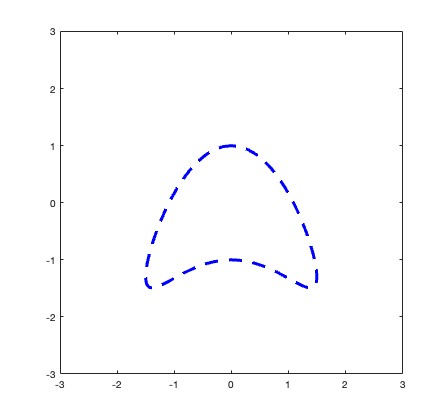}}
	\caption{Visual representations of the three scatterers defined in Table \ref{scatterers}.}\label{fig:scatterers}
\end{figure}

\subsubsection{Reconstructions with and without regularization} 

For this set of examples, we wish to test the accuracy of the reconstruction method with and without regularization. In the case of reconstructions without regularization, we take $\alpha = 0$ in the computed imaging functional $W_{\text{Reg}}(z)$. These examples are contrasted with the reconstruction of the scatterer using the ad--hoc regularization parameter $\alpha=0.0001$ and the Tikhonov filter function given in \eqref{filters}. We see in Figures \ref{recon1} and \ref{recon2} that with added noise in the scattering data, the imaging functional is unable to recover the kite and star--shaped scatterers without regularization. For the kite--shaped scatterer, we let $\delta = 0.05$ for the reconstruction in Figure \ref{recon1}. For the star--shaped scatterer, we let $\delta = 0.02$ in the reconstruction given in Figure \ref{recon2}.

\begin{figure}[h]
\centering 
\includegraphics[scale=0.16]{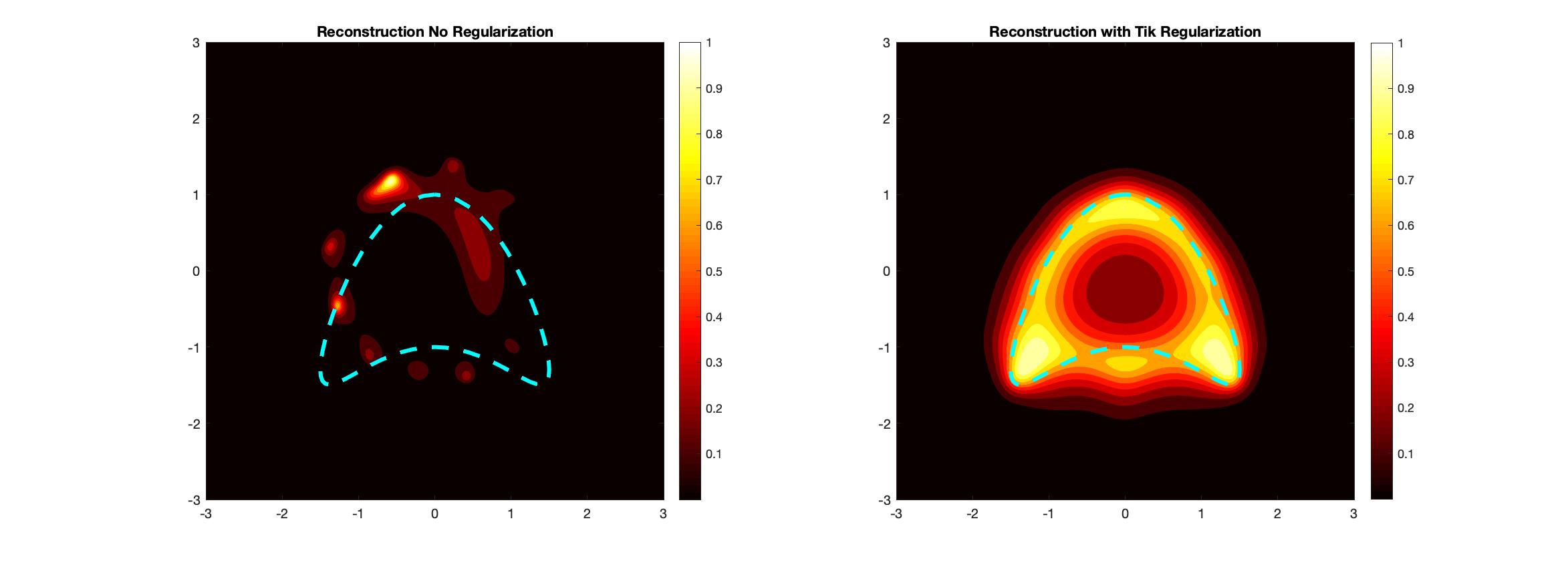}
\caption{Reconstruction of the kite--shaped scatterer with and without regularization where we add $5\%$ random noise to the data. Here we use the Tikhonov filter function given in \eqref{filters} with $\alpha = 0.0001$. Left: reconstruction without regularization and Right: reconstruction with regularization.}
\label{recon1}
\end{figure}

\begin{figure}[h]
\centering 
\includegraphics[scale=0.16]{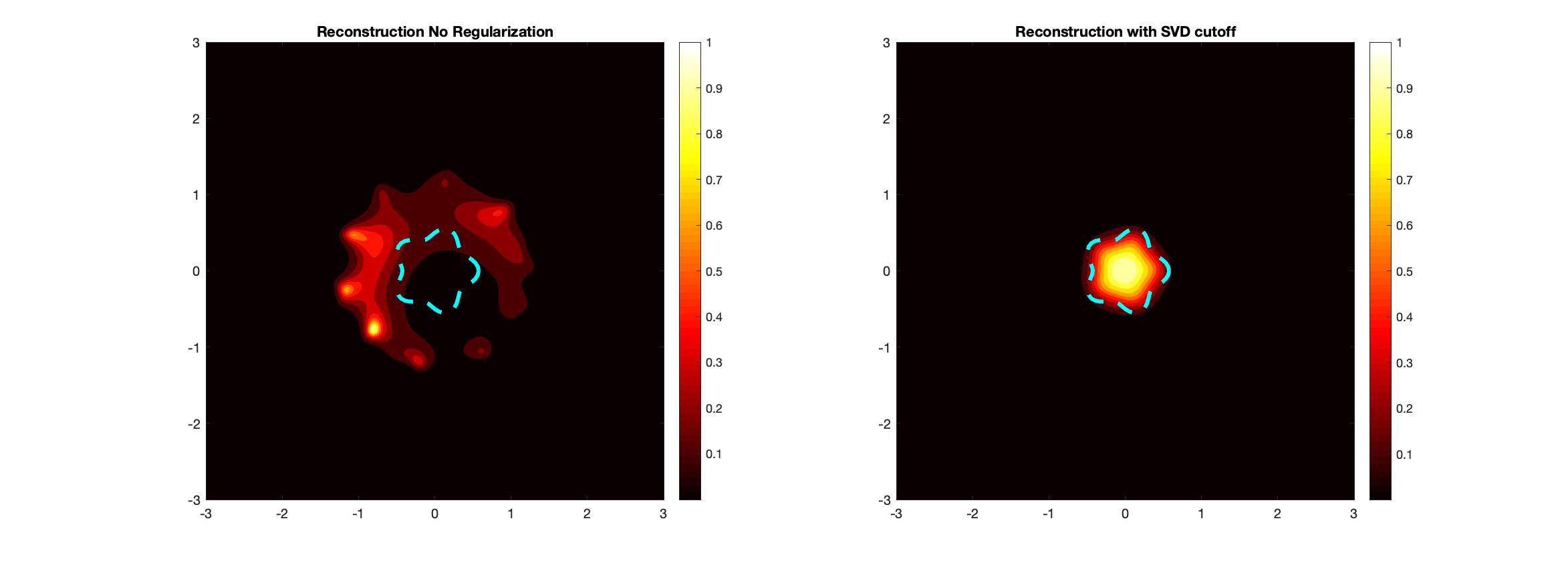}
\caption{Reconstruction of the star--shaped scatterer with and without regularization where we add $2\%$ random noise to the data. Here we use the Tikhonov filter function given in \eqref{filters} with $\alpha = 0.0001$. Left: reconstruction  without regularization and Right: reconstruction with regularization.}
\label{recon2}
\end{figure}

From these two examples, we see that the reconstruction without regularization is unstable when given noisy data. As stated in the previous section, this is due to the fact that without regularization the imaging functional requires you to divide by the eigenvalues of an ill--conditioned matrix (since it approximates a compact operator). This would imply that the regularization is needed for accurate reconstructions. Next, we see if the regularization is necessary when one has accurate data (i.e., with no added error). In Figure \ref{recon3}, we provide the reconstructions of the peanut--shaped scatterer with and without regularization. Here, we again take $\alpha = 0.0001$ for the reconstruction with regularization but we now use the GLSM filter function given in \eqref{filters}. We again see that the regularized imaging functional provides a better reconstruction.

\begin{figure}[h]
\centering 
\includegraphics[scale=0.16]{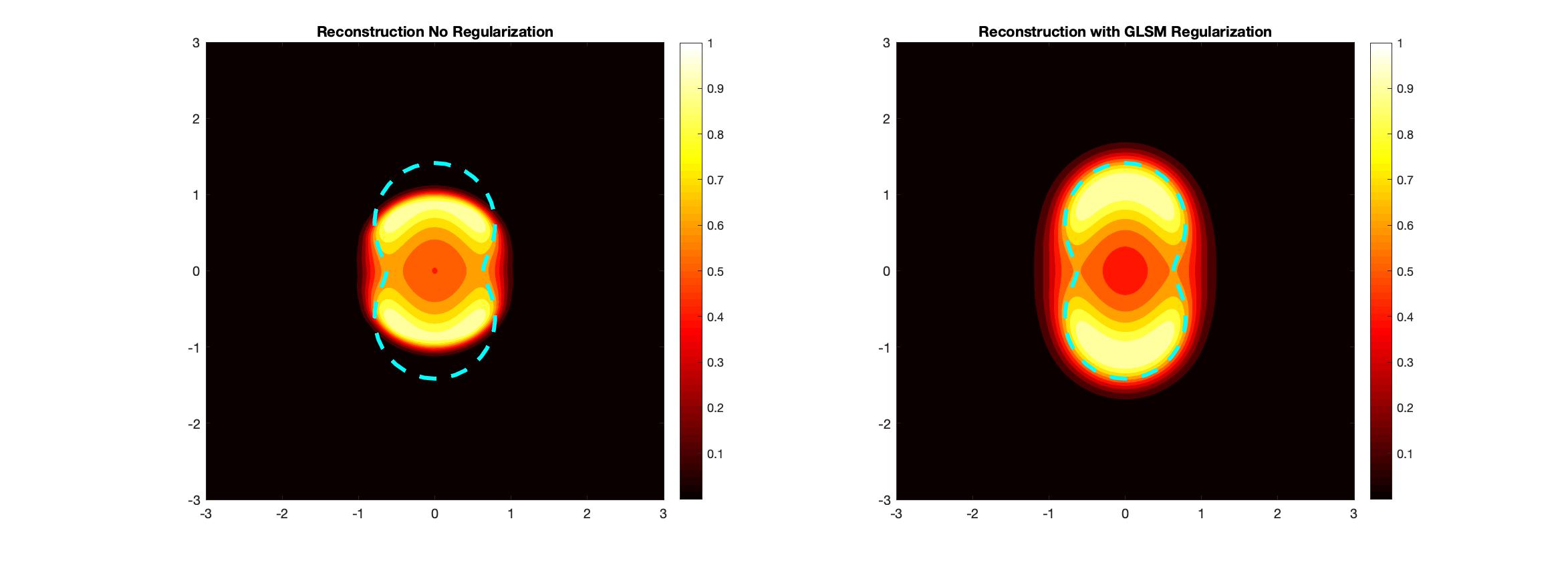}
\caption{Reconstruction of the star--shaped scatterer with and without regularization where we add $0\%$ random noise to the data. Here we use the Tikhonov filter function given in \eqref{filters} with $\alpha = 0.0001$. Left: reconstruction without regularization and Right: reconstruction with regularization.}
\label{recon3}
\end{figure}

We now check the restriction that $-k^2$ is not a Dirichlet eigenvalue for $D$. In Figure \ref{ReconDE}, we provide the numerical reconstruction for the kite--shaped scatterer with $k= 2.209855$, which is approximately the first Dirichlet eigenvalue for the scatterer. Here, the eigenvalue is computed using a double--layer ansatz. Only the boundary of the scatterer is being reconstructed, both inside and outside, we have small values of the imaging functional.
\begin{figure}[h]
\centering 
\includegraphics[scale=0.16]{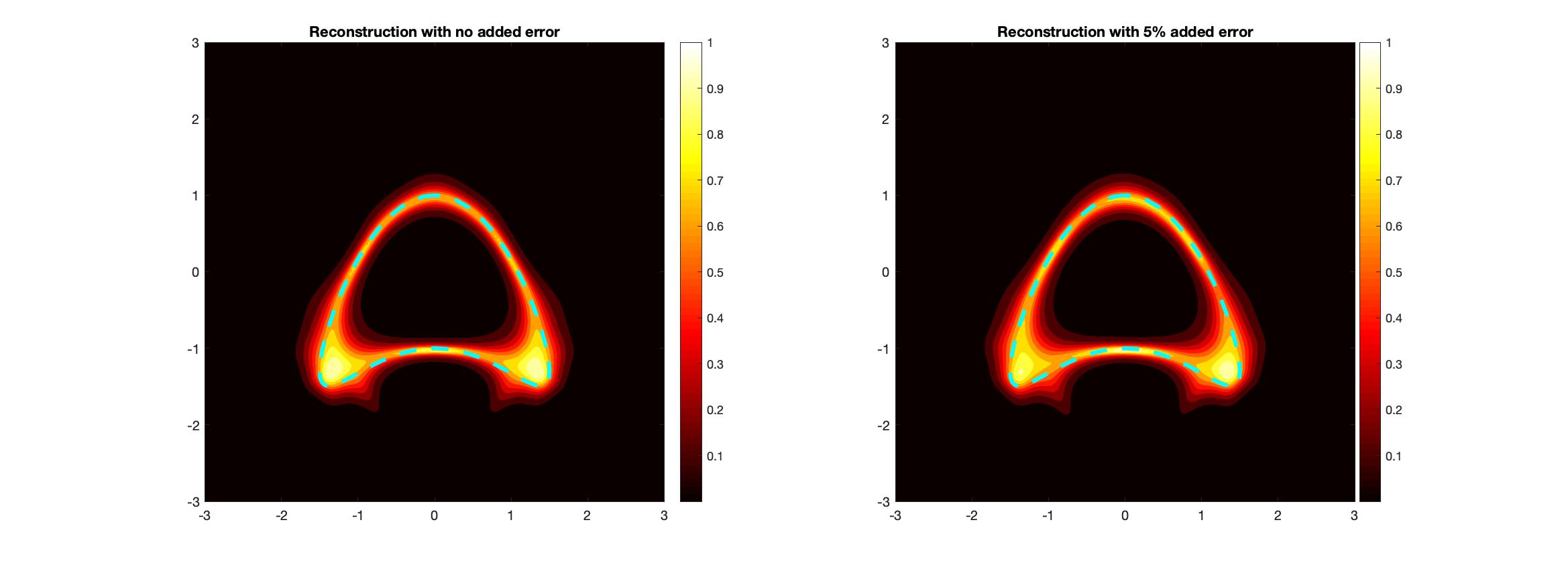}
\caption{Reconstruction of the kite--shaped scatterer with $k= 2.209855$ i.e. the first Dirichlet eigenvalue of the region. Here we use the GLSM filter function given in \eqref{filters} with $\alpha = 0.0001$. Left: reconstruction without added noise and Right: reconstruction with 5$\%$ added noise.}
\label{ReconDE}
\end{figure}

\subsubsection{Using different regularization parameters} 

Now, in this set of examples we wish to numerically test how the reconstruction depends on the regularization parameter $\alpha$. As in the previous examples, we will use an ad--hoc regularization parameter $\alpha=0.0001$. In Figure \ref{recon4}, we provide reconstructions of the kite--shaped scatterer using the GLSM and Spectral cut--off filter functions given in \eqref{filters}. In both reconstructions, $\delta=0.05$ we see that the reconstructions are comparable to the previous examples given.

\begin{figure}[h]
\centering 
\includegraphics[scale=0.16]{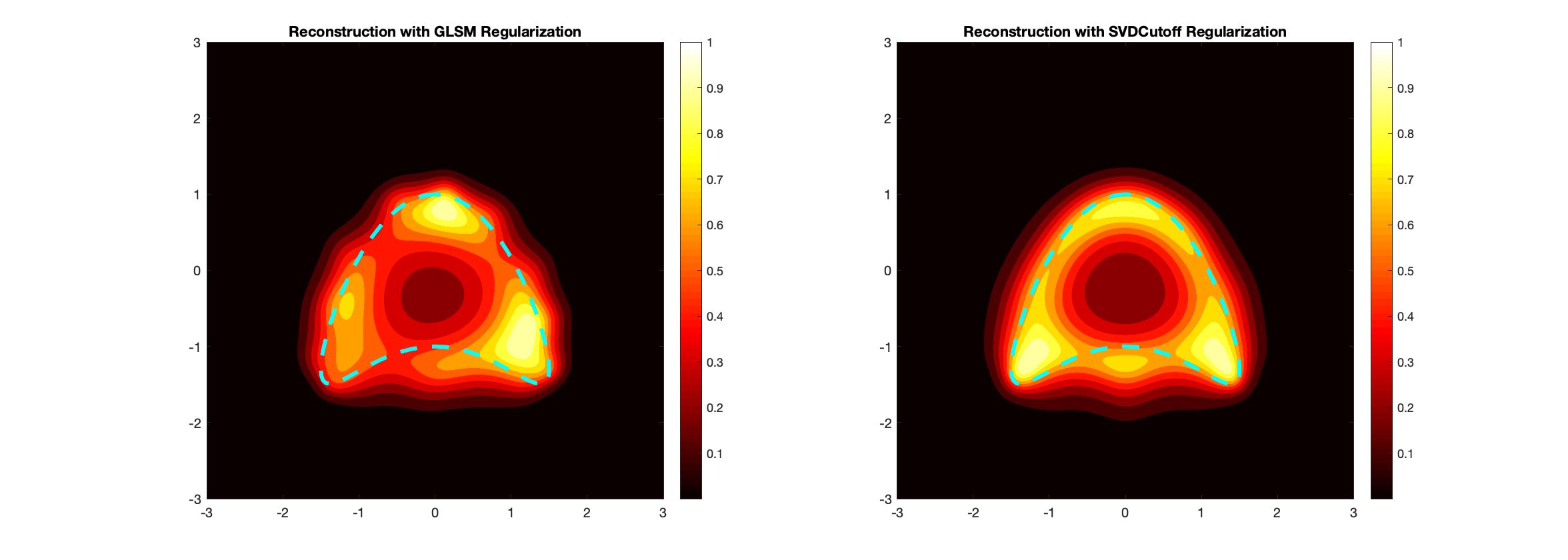}
\caption{Reconstruction of the kite--shaped scatterer using the GLSM and Spectral cut--off filter functions given in \eqref{filters} with $\alpha = 0.0001$. Here we add $5\%$ random noise to the data. Left: reconstruction  with GLSM regularization and Right: reconstruction with Spectral cut--off regularization.}
\label{recon4}
\end{figure}

In the following examples, we test the sensitivity of the Spectral cut--off regularization with respect to the regularization parameter. To this end, in Figure \ref{recon5} we provide reconstructions of the kite--shaped scatterer with $\delta=0.05$ for different regularization parameters $\alpha$. From this we see that the reconstructions are very similar for our two choices of the parameter $\alpha$. Here we consider the case when $\alpha=0.01$ and $0.0001$ in our reconstructions. Also, in Figure \ref{recon6} we provide a similar example for the star--shaped scatterer with $\delta=0.02$.

\begin{figure}[h]
\centering 
\includegraphics[scale=0.16]{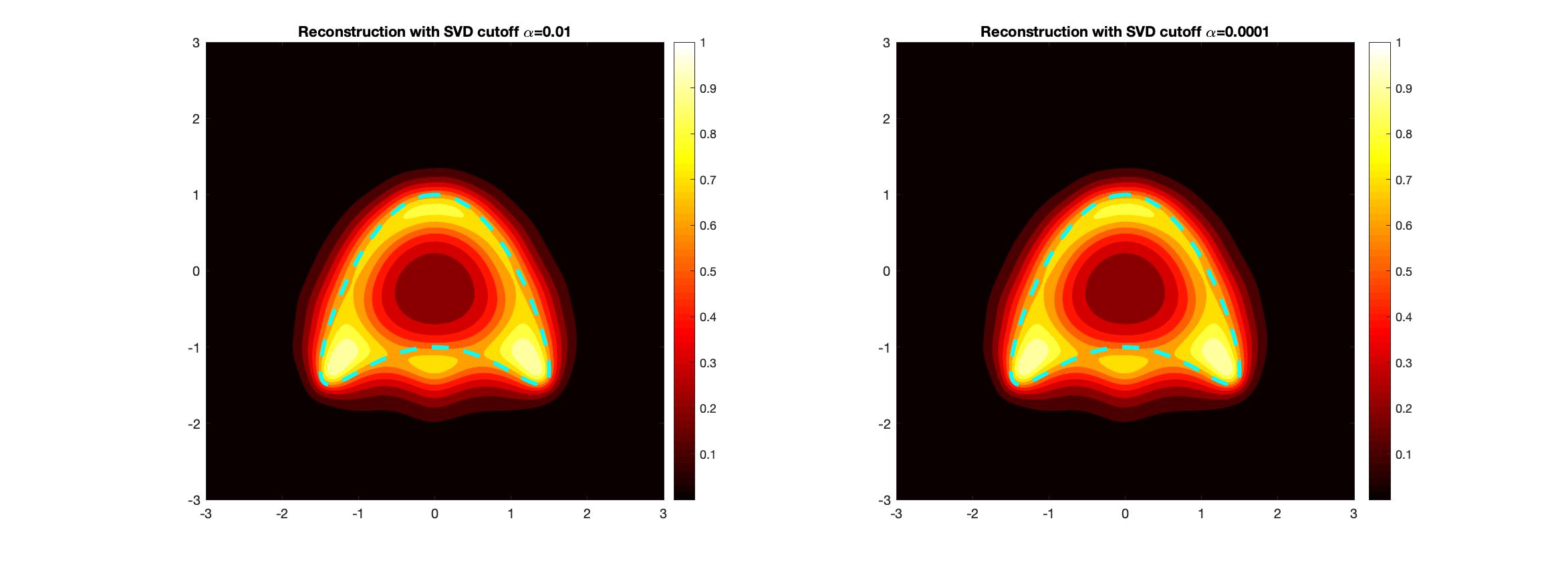}
\caption{Reconstruction of the kite--shaped scatterer using the Spectral cut--off filter functions given in \eqref{filters} for different values of $\alpha$. Here we add $5\%$ random noise to the data. Left: reconstruction with $\alpha =0.01$ and Right: reconstruction with $\alpha = 0.0001$.}
\label{recon5}
\end{figure}

\begin{figure}[h]
\centering 
\includegraphics[scale=0.16]{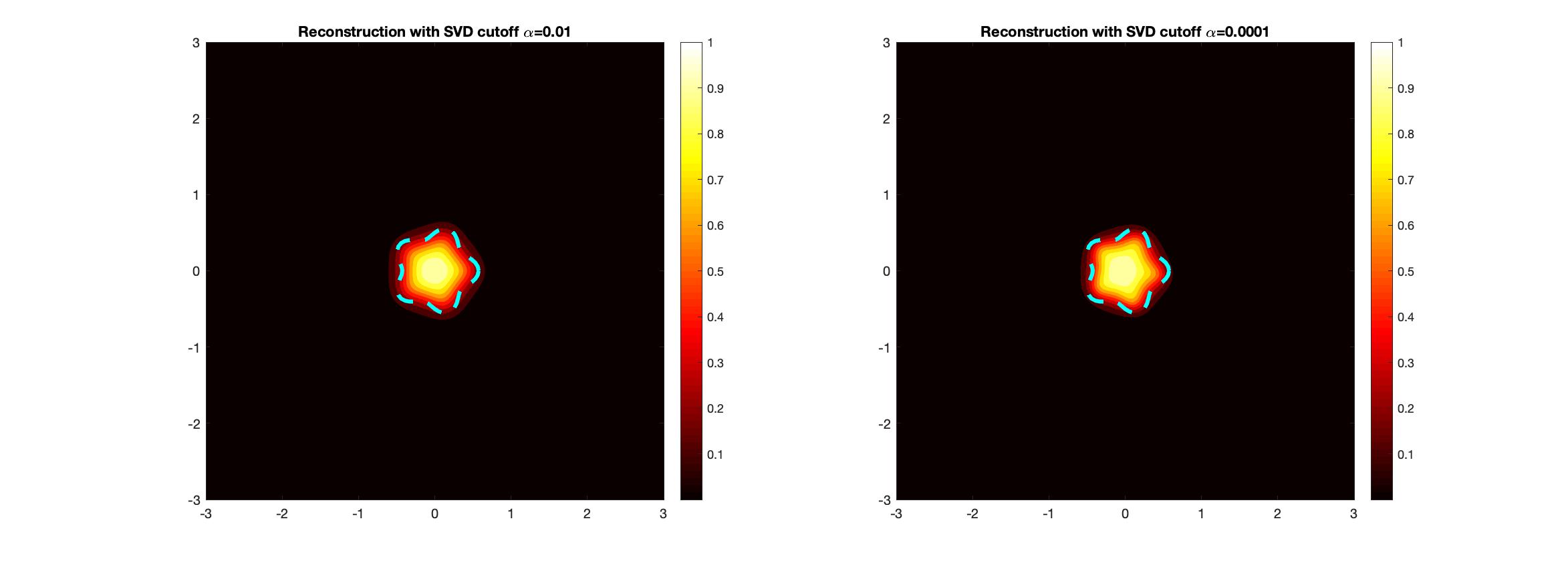}
\caption{Reconstruction of the star--shaped scatterer using the Spectral cut--off filter functions given in \eqref{filters} for different values of $\alpha$. Here we add $2\%$ random noise to the data. Left: reconstruction with $\alpha =0.01$ and Right: reconstruction with $\alpha = 0.0001$.}
\label{recon6}
\end{figure}

Even though the Spectral cut--off seems to provide similar reconstructions for different regularization parameters, this may not be true in general. In order to provide an analytical method for choosing a regularization parameter, in \cite{regfm2} a `discrepancy' principle was given, provided the noise level $\delta$ is known. Indeed, for the Tikhonov and GLSM regularization methods it is given that 
\begin{align} \label{reg-param}
\alpha_{\text{Tik}}(\delta) =  \frac{1}{4} \delta^{\left(\frac{1}{4}-p\right)} \quad \text{ and } \quad \alpha_{\text{GLSM}}(\delta) =  \delta^{\frac{1}{2}\left(\frac{1}{4}-p\right)}  \quad \text{for any $\,\,p\in (0,1/4)$.}
\end{align}
Since $\delta$ is not known explicitly in many applications, we approximate it via 
$$ \text{Error}= \| {\bf N} - {\bf N}^\top \|_2 /\|  {\bf N} \|_2 .$$ 
This is a good approximation of the noise level since the reciprocity relationship in \eqref{recip} implies that ${\bf N}$ is a symmetric matrix. In Figure \ref{recon7}, we provide the reconstruction of the peanut--shaped scatterer with $\delta=0.1$ for $\alpha$ chosen ad--hoc and optimally via \eqref{reg-param} for the Tikhonov filter function. Here we see that the reconstruction using the optimal regularization parameter gives a better image. 

\begin{figure}[h]
\centering 
\includegraphics[scale=0.16]{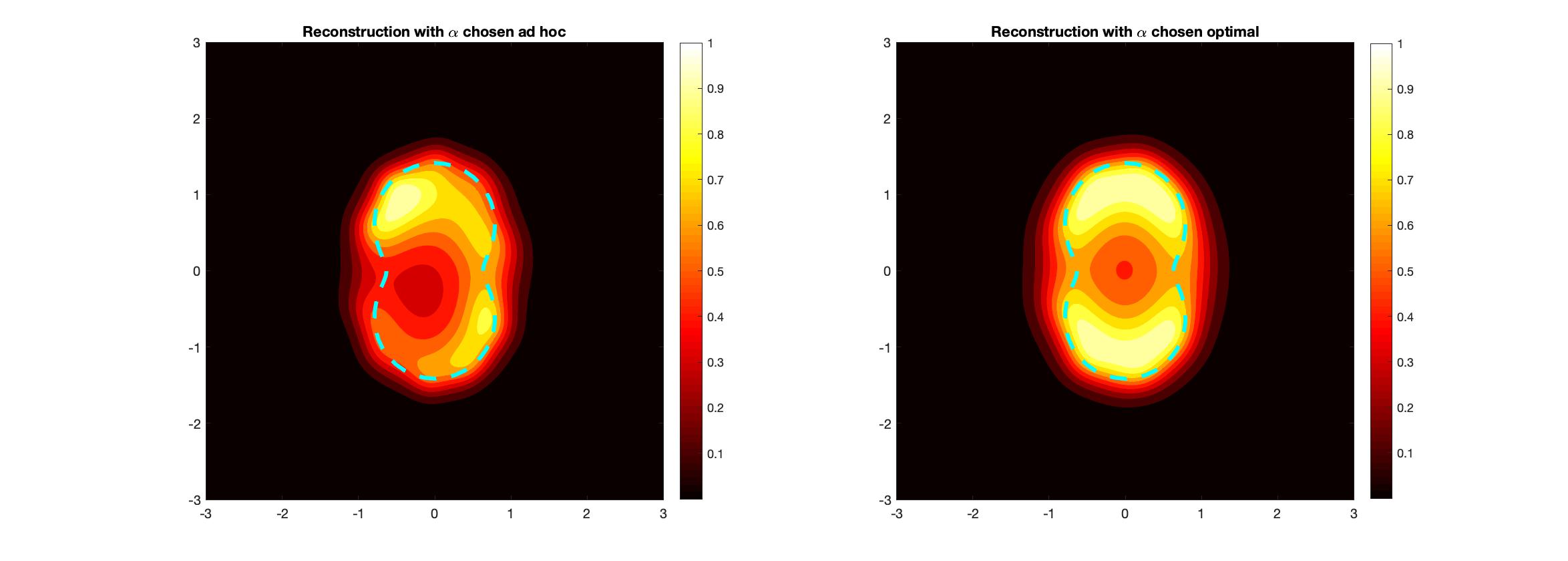}
\caption{Reconstruction of the peanut--shaped scatterer using the Tikhonov filter function given in \eqref{filters} with $\alpha$ chosen ad--hoc and chosen {optimally} where we add $10\%$ random noise to the data. Left: reconstruction using $\alpha = 0.001$  and Right: reconstruction using $\alpha = 0.25\cdot\text{Error}^{1/8}$.}
\label{recon7}
\end{figure}

\subsubsection{Reconstructions without Laplacian data} 

In this section, we consider the case when one only has access to the scattered field and not its Laplacian. Here, we wish to see if our aforementioned reconstruction method works in this case. Therefore, we recall that our near--field operator is actually defined by the kernel function $-2k^2 u_{\text{pr}} (x,y)$. Assuming that one only has the measured scattered field $u^{\text{scat}}$, we can use the fact that $u^{\text{scat}}= u_{\text{pr}} + u_{\text{ev}}$ along with the asymptotics 
$$ u_{\text{ev}} = \mathcal{O} \left({\text{e}^{-kr}}/{\sqrt{r}}\right) \quad \text{ as $\,\,\,r=|x| \to \infty.$}$$
To numerically see this, in Table \ref{small-ev} we provide the $\infty$--norms of the matrices
$${\bf U}_{\text{pr}}= \big[ \Delta_x u^{\text{scat}} (x_i , y_j) - k^2 u^{\text{scat}} (x_i , y_j) \big]_{i,j = 1}^{64}$$
\text{and }
$${\bf U}_{\text{ev}}= \big[ \Delta_x u^{\text{scat}} (x_i , y_j) + k^2 u^{\text{scat}} (x_i , y_j) \big]_{i,j = 1}^{64}$$
for our three scatterers as well as different collection curve radii $R$. These matrices are an approximation of the propagating and evanescent part of the scattered field.
\begin{table}[H]
	\centering
	\begin{tabular}{ c  l c l  c | c  }
		Scatterer     &  Radius & $\|  \bf{U}_{\text{pr}} \|_{\infty}$  &  $\| \bf{U}_{\text{ev}} \|_{\infty}$ \\
\hline 
\hline
		\vspace{-2.0ex}\\
		Kite--shaped  & $R=3$ & 0.1219 &  0.0012
		\vspace{1.5ex} \\ 
		Star--shaped  & $R=3$  & 0.0374  & 9.1526$\times10^{-7}$
		\vspace{1.5ex}\\
		Peanut--shaped   & $R=3$   & 0.0612  & 8.7533$\times10^{-5}$
		\vspace{1.5ex}\\ 
		Peanut--shaped   & $R=5$   & 0.0442  & 1.4755$\times10^{-8}$
		\vspace{1.5ex}\\ 
		Peanut--shaped   & $R=8$   & 0.0306  & 5.2090$\times10^{-14}$
		\vspace{1.5ex}\\ 
\hline
\end{tabular}\caption{In this table, we see that the evanescent part of the scattered field is small compared to the propagating field. } \label{small-ev}
\end{table}

Notice that for all three scatterers the evanescent part of the scattered field is significantly smaller than the propagating part. This implies that the near--field matrix can be approximated by 
$${\bf N}^{\text{scat}} = -2k^2\big[ u^{\text{scat},\delta} (x_i , y_j) \big]_{i,j = 1}^{64} \quad \text{ such that } \quad {\bf N} = {\bf N}^{\text{scat}}  + \mathcal{O} \left({\text{e}^{-kR}}/{\sqrt{R}}\right)$$
when the measurement curve has a radius of $R$. With this, just as in the so--called modified/approximate factorization method \cite{approxFM1,approxFM2,approxFM3} we expect that our reconstruction method will work if we use the eigenvalues and vectors of the matrix $ {\bf Q}{\bf N}^{\text{scat}} {\bf Q}^\top {\bf R}$ in our imaging functional $W_{\text{Reg}}(z)$. A similar idea was used in \cite{near-lsmBH} when the linear sampling method was applied to recovering a clamped obstacle.

Now, we present numerical examples for recovering the scatterer using only the scattered field. Here, we will provide examples for the kite and peanut--shaped scatterers. In Figure \ref{recon8} we give the reconstruction for the kite--shaped scatterer, where we use both pieces of scattering data $u^{\text{scat}}$ and $\Delta_x u^{\text{scat}}$ as well as the reconstruction using only the scattered field. In this example, the regularization parameter is chosen ad--hoc. Next, in Figure \ref{recon9} we recover the peanut--shaped scatterer using only the scattered field, where the regularization parameter is chosen optimally using \eqref{reg-param}. Here, the noise level $\delta$ is approximated just as in the previous section by the relative error of the matrix ${\bf N}^{\text{scat}}$ deviating from symmetry. Again, we check the applicability of the reconstructions providing an example with and without added noise to the data. Notice that in both examples that the reconstruction using just the scattered field is comparable to when both pieces of scattering data is known. 

\begin{figure}[h]
\centering 
\includegraphics[scale=0.16]{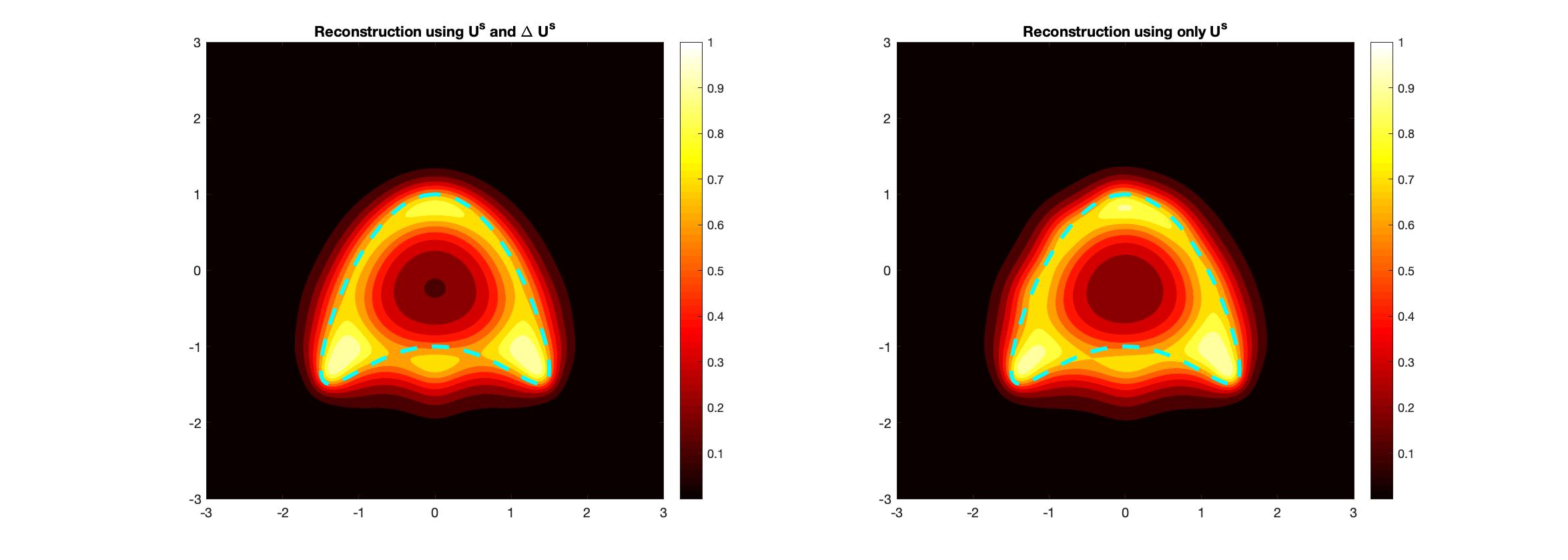}
\caption{Reconstruction of the kite--shaped scatterer using the Tikhonov filter function given in \eqref{filters} with $\alpha = 0.0001$ where we add $5\%$ random noise to the data. Left: reconstruction using ${\bf N}$ and Right: reconstruction using ${\bf N}^{\text{scat}}$.}
\label{recon8}
\end{figure}

\begin{figure}[h]
\centering 
\includegraphics[scale=0.16]{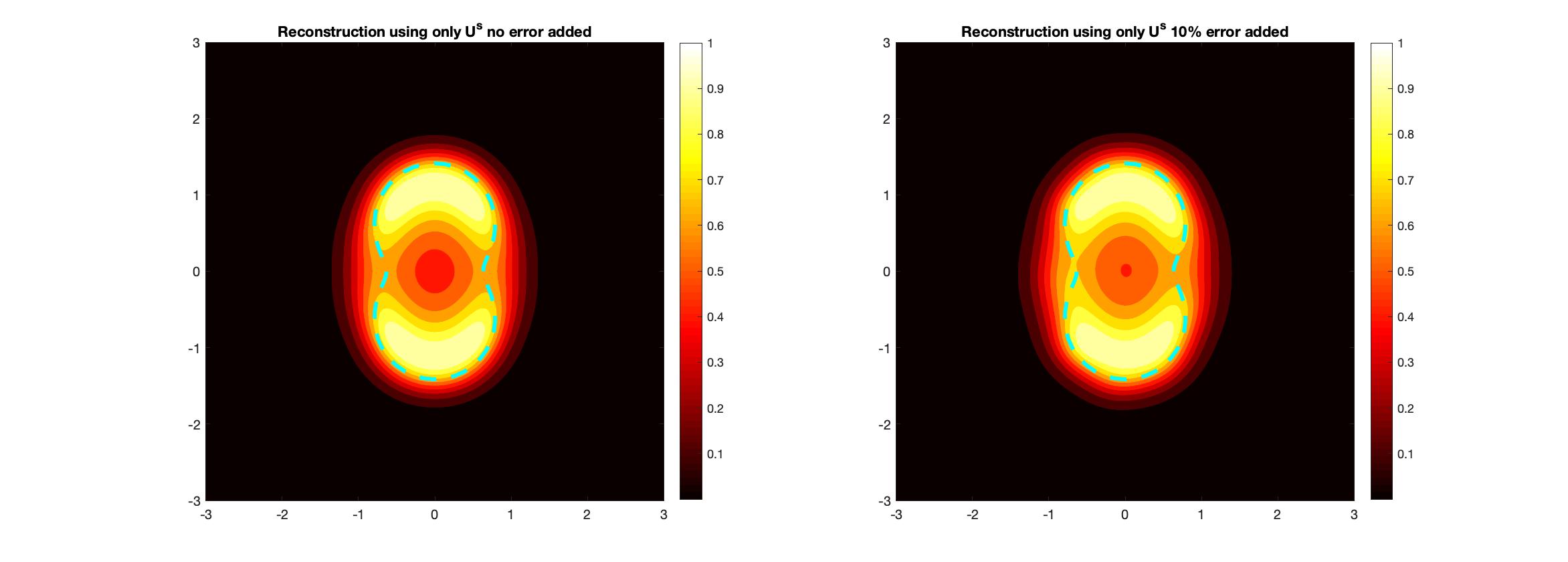}
\caption{Reconstruction of the peanut--shaped scatterer using the GLSM filter function given in \eqref{filters}, using only the scattered field with and without added noise and $\alpha=\text{Error}^{1/16}$. Left: reconstruction with no added noise and Right: reconstruction with 10$\%$ added noise.}
\label{recon9}
\end{figure}

\section{Conclusion}\label{conclusion}
In conclusion, we studied an inverse shape problem for recovering a simply supported obstacle from near--field point--source measurements for the biharmonic Helmholtz equation with applications to flexural wave scattering. Even though the analysis and computations are presented in $\R^2$ the same results would hold in $\R^3$. To apply the factorization method, we introduced a far--field transformation that augments the near--field operator into an operator admitting a symmetric factorization. This yields a rigorous characterization of the obstacle in terms of spectral data of the transformed operator and leads to a practical reconstruction algorithm via a regularized imaging functional. The numerical experiments with synthetic data demonstrate that the proposed post--processing produces stable reconstructions in the presence of noisy data. We provided extensive numerical examples to highlight the impact of regularization methods and parameter choice. We also investigated the case when only the scattered field (without Laplacian data) is available and observed that accurate reconstructions remain feasible when the measurement curve is sufficiently large.

Several extensions of this work are of interest. These include developing reconstruction procedures for other plate boundary conditions and material parameters, and studying limited--aperture measurement configurations. Indeed, here we take the simply supported plate boundary conditions assuming the Poisson ratio equal to one but in many applications (see \cite{ffdata-simpsupport}) this is not the case. Note that we have proven that the aforementioned near--field data uniquely determines the scatterer, but the factorization method studied here adds the assumption that $-k^2$ is not an interior Dirichlet eigenvalue of the scatterer. We found that this restriction on the wave number may not be necessary for a decent reconstruction, but it would be advantageous to derive a reconstruction method without it.

\vspace{0.2in}

\noindent{\bf Acknowledgments:} The research of author I. Harris is partially supported by the NSF DMS Grants 2509722 and 2208256.


\end{document}